\documentclass[pdflatex,sn-mathphys-num]{sn-jnl}

\usepackage{graphicx} 
\usepackage{multirow} 
\usepackage{amsmath,amssymb,amsfonts} 
\usepackage{amsthm} 
\usepackage{mathrsfs} 
\usepackage[title]{appendix} 
\usepackage{xcolor} 
\usepackage{textcomp} 
\usepackage{manyfoot} 
\usepackage{booktabs} 
\usepackage{algorithm} 
\usepackage{algorithmicx} 
\usepackage{algpseudocode} 
\usepackage{listings}

\theoremstyle{thmstyleone}%
 \newtheorem{thm}{Theorem}[section]
 \newtheorem{cor}[thm]{Corollary}
 
 \newtheorem{prop}[thm]{Proposition}
 \theoremstyle{definition}
 \newtheorem{defn}[thm]{Definition}
 \theoremstyle{remark}
 
 \newtheorem{ex}[thm]{Example}
 \numberwithin{equation}{section}

\theoremstyle{thmstylethree}%

\begin{document}

\title[Norm Inequalities for Complementable Operators and Parallel Sums]{Norm Inequalities for Complementable Operators and Parallel Sums}

\author*[1]{\fnm{Sachin Manjunath} \sur{Naik}}\email{sachin.n@manipal.edu, sachinmaths46@gmail.com}
\author[2]{\fnm{P. Sam} \sur{Johnson}}\email{sam@nitk.edu.in}

\affil*[1]{\orgdiv{Manipal Institute of Technology}, \orgname{Manipal Academy of Higher Education, Manipal 576 104}, \orgaddress{\street{Karnataka, India}. \city{} \postcode{}}}

\affil[2]{\orgdiv{Department of Mathematical and Computational Sciences}, \orgname{National Institute of Technology Karnataka, Surathkal}, \orgaddress{\street{Mangaluru}, \postcode{575025}, \state{Karnataka}, \country{India}}}


\abstract{This paper investigates the structural and quantitative behaviors of complementable operators on Hilbert spaces, focusing on their norm characteristics and geometric profiles. We establish a comprehensive framework of norm inequalities and lower-bound relationships between a bounded linear operator and its generalized Schur complement (bilateral shorted operator). Under explicit operator factorization and range inclusion criteria, we define the exact conditions under which a bounded linear operator contracts or expands vectors relative to its Schur complement.  Furthermore, we explore the lower boundedness and stability configurations of $(M, N, \lambda)$-complementable operators, proving that a bounded-below Schur complement acts as a sufficient condition to propagate injectivity and lower-bounded stability to the global operator. These structural results are subsequently applied to the network-theoretic setting of the parallel sum of two bounded linear operators. With some specific orthogonality conditions, we derive a novel norm decomposition identity, sharp two-sided global operator bounds, and algebraic restrictions on Douglas reduced solutions via  Moore-Penrose inverses.}

\keywords{Schur complement, complementable operators, parallel sum\\
\noindent\textbf{Mathematics Subject Classification (2020):}
47A08, 47A64}

\maketitle

\section{Introduction}
The mathematical study of operator relationships has been significantly enriched by concepts originally rooted in electrical network theory, most notably the parallel sum and the shorting of matrices. Anderson and Duffin defined the parallel sum $A:B$ for positive semidefinite matrices to model the joint impedance of resistive $n$-port networks \cite{AndersonDuffin}. This operational framework has since undergone extensive generalization. In particular, parallel summation and shorting operations have been successfully extended to non-positive operators to accommodate networks containing reactive elements \cite{Arlinskii}. Parallel to these developments, the study of complementable operators emerged from the classical analysis of the Schur complement. By extending classical matrix decompositions to infinite-dimensional Hilbert spaces, the framework of complementable operators broadens both the theoretical scope and the applicability of these techniques \cite{Antezana}. The classical Schur complement originated in the study of partitioned matrices. 

Let $T$ be a block matrix of size $(n+m) \times (n+m)$, partitioned with respect to the direct sum $\mathbb{C}^n \oplus \mathbb{C}^m$ as
\[
T = \begin{pmatrix} A & B \\ C & D \end{pmatrix},
\]
where $A \in \mathbb{C}^{n \times n}$, $B \in \mathbb{C}^{n \times m}$, $C \in \mathbb{C}^{m \times n}$, and $D \in \mathbb{C}^{m \times m}$. If $D$ is nonsingular, the classical Schur complement of $D$ in $T$, denoted by $T_{/D}$, is defined as
\[
T_{/D} = A - BD^{-1}C.
\]
While this definition plays a fundamental role in matrix analysis across various mathematical and physical domains, it is constrained to an operator of lower order acting on a subspace. 

To overcome this limitation in infinite-dimensional settings, Ando \cite{Ando} generalized the Schur complement by introducing the concept of complementable operators on $\mathbb{C}^n$ with respect to a given subspace $M \subseteq \mathbb{C}^n$. Let $P$ denote the orthogonal projection onto $M$. A matrix $T \in \mathbb{C}^{n \times n}$ is said to be $M$-complementable if there exist matrices $M_r$ and $M_\ell$ satisfying the following conditions:
\[
P M_r = M_r, \quad M_\ell P = M_\ell, \quad P T M_r = P T, \quad \text{and} \quad M_\ell T P = T P.
\]
Under these conditions, the operators $M_\ell T$ and $T M_r$ are uniquely determined and independent of the specific choices of $M_\ell$ and $M_r$. Ando subsequently defined the Schur complement of $T$ with respect to $M$ as
\[
T_{/M} = T - M_\ell T = T - T M_r.
\]
Notably, if $T$ is represented in block form as $\begin{pmatrix} A & B\\ C & D \end{pmatrix}$ with respect to the decomposition $\mathbb{C}^n = M \oplus M^\perp$, and if the range $R(D)$ is closed, then $T_{/M}$ embeds naturally as the block matrix
\[
T_{/M}=\begin{pmatrix} A - B D^{\dagger} C & 0 \\ 0 & 0 \end{pmatrix}.
\]

Ando's foundational framework has inspired numerous modern extensions. Mitra and Puri \cite{Mitra} extended the notion of complementable operators to the abstract setting of operators between distinct Hilbert spaces, thereby introducing and characterizing bilateral shorted operators. For a comprehensive overview of the development and multifaceted applications of the Schur complement, we refer the reader to \cite{Arias, Butler, Carlson1, Carlson3, Krein, Cottle, shorting, Goller, krein3, Ouellette, Zhang}.

Utilizing block matrix representations of operators, Antezana et al. \cite{Antezana} introduced the notions of complementability and weak complementability for bounded linear operators $T: \mathcal{H} \to \mathcal{K}$ with respect to closed subspaces $M \subseteq \mathcal{H}$ and $N \subseteq \mathcal{K}$. Their approach, formulated via range inclusion conditions and operator moduli, led to a rigorous definition of the bilateral shorted operator (Schur complement) in Hilbert spaces while preserving the core algebraic properties of the finite-dimensional case. This bilateral framework proves particularly powerful when defining the parallel sum of bounded linear operators. Specifically, the parallel sum $A:B$ can be realized as the shorted operator of the block matrix 
\[
\begin{pmatrix} A & A \\ A & A+B \end{pmatrix}
\] 
with respect to the closed subspace $\mathcal{H} \oplus \{0\}$.

Further characterizations were established by Arias et al. \cite{Arias}, who investigated the interplay between complementability and the existence of specific bounded projections. While the formulation of the Schur complement in \cite{Antezana} and \cite{Arias} heavily relies on the machinery of weak complementability, Naik and Johnson \cite{IJPAM} recently simplified this framework for complementable operators. They provided a geometric characterization of complementability in terms of the images of the unit ball under the respective block components of $T$. Furthermore, they introduced the notion of $(M,N,\lambda)$-complementable operators and investigated their convergence properties. For additional insights and recent advances in the theory of complementable operators, see \cite{SachinLAA, Massey, SachinAOT, Arias, IJPAM}.

In this paper, we establish several norm inequalities and lower-bound relationships between a bounded linear operator and its Schur complement. Furthermore, we investigate and derive several new properties concerning the operator norm of a parallel sum.

\section{Preliminaries}

The mathematical framework for the study of complementable operators and Schur complements is grounded in the structure of bounded linear operators on Hilbert spaces. Throughout this manuscript, $\mathcal{H}$ and $\mathcal{K}$ denote real or complex Hilbert spaces. We denote $\mathcal{B}(\mathcal{H}, \mathcal{K})$ as the space of all bounded linear operators from $\mathcal{H}$ to $\mathcal{K}$, and let $T^*$, $\mathcal{N}(T)$, and $\mathcal{R}(T)$ represent the adjoint, the null-space, and the range-space of an operator $T$, respectively. For a closed subspace $M \subseteq \mathcal{H}$, the set of vectors within the unit ball is denoted as $\mathcal{B}_M = \{x \in M : \|x\| \leq 1 \}$. A self-adjoint operator $T \in \mathcal{B}(\mathcal{H})$ is defined as positive, denoted $T \geq 0$, if $\langle Tx, x \rangle \geq 0$ for all $x \in \mathcal{H}$. For such positive operators, there exists a unique positive square root denoted by $T^{1/2}$. The modulus of any operator $T$ is defined as $|T| := (T^*T)^{1/2}$. For $T\in \mathcal{B}(\mathcal{H},\mathcal{K})$, the reduced minimum modulus of $T$ is defined as $$\gamma (T)=\inf \{\|Tx\| : x\in \mathcal{H} \cap \mathcal{N}(T){^{\perp}}~ \text{and}~ \|x\|=1 \}.$$  Let $ T: \mathcal{H} \to \mathcal{K} $ be a linear operator and let $ M $ and $ N $ be closed subspaces of $\mathcal{H}$ and $\mathcal{K}$ respectively. We can express $\mathcal{H}$ as the orthogonal direct sum of $ M $ and $ M^{\perp}$ and $\mathcal{K}$ as the orthogonal direct sum of $ N $ and $ N^{\perp}$. With respect to the above decompositions, the operator $ T $ can be written in a block matrix form as
\begin{equation}\label{**}
	T = \begin{pmatrix}
		A & B \\
		C & D
	\end{pmatrix}
\end{equation}
where  $A: M \to N$,
$B: M^{\perp}\to N $,
$C: M \to N^{\perp}$
and $D: M^{\perp}\to N^{\perp}$ are bounded operators.

\begin{thm}(\cite{Douglas})\label{dgls}
	Let $ A, B \in \mathcal{B}(\mathcal{H}) $. Then the following are equivalent:
	\begin{enumerate}
		\item $ \mathcal{R}(A) \subseteq \mathcal{R}(B) $;
		\item $ AA^* \leq \lambda BB^* $, for some $ \lambda > 0 $;
		\item There exists a bounded operator $ C \in \mathcal{B}(\mathcal{H}) $ such that $ A = BC $.
	\end{enumerate}
	Moreover, if these equivalent conditions hold, then there is a unique operator $ C \in \mathcal{B}(\mathcal{H}) $ such that
	\begin{enumerate}
		\item[(i)] $ \|C\| = \inf \{ \lambda > 0 : AA^* \leq \lambda BB^* \} $;
		\item[(ii)] $ \mathcal{N}(A) = \mathcal{N}(B) $;
		\item[(iii)] $ \mathcal{R}(C) \subseteq \mathcal{N}(B)^\perp $.
	\end{enumerate}
\end{thm}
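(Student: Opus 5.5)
The plan is the classical Douglas argument: prove the cycle $(3)\Rightarrow(2)\Rightarrow(1)\Rightarrow(3)$ and then read off the properties of a distinguished solution $C$. Two of the implications are short. For $(3)\Rightarrow(2)$, write $A=BC$; then $AA^*=BCC^*B^*\le \|C\|^2BB^*$, since $CC^*\le\|C\|^2 I$. For $(2)\Rightarrow(1)$ I do not go through ranges of square roots directly. Instead I construct $C$, which gives $(3)$, and then $(1)$ follows at once. So the core is really $(2)\Rightarrow(3)$, together with $(1)\Rightarrow(2)$.

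For $(2)\Rightarrow(3)$, the inequality $AA^*\le\lambda BB^*$ says $\|A^*x\|^2\le\lambda\|B^*x\|^2$ for every $x$. I define $D_0:\mathcal{R}(B^*)\to\mathcal{H}$ by $D_0(B^*x)=A^*x$. This is well defined, because $B^*x=0$ forces $A^*x=0$, and it is bounded with $\|D_0\|\le\sqrt{\lambda}$. I extend $D_0$ by continuity to $\overline{\mathcal{R}(B^*)}=\mathcal{N}(B)^\perp$ and set it equal to $0$ on $\mathcal{N}(B)$; call the result $D$. Then $DB^*=A^*$, and taking $C=D^*$ gives $A=BC$. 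This also shows $\|C\|^2\le\lambda$ for every admissible $\lambda$.

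For $(1)\Rightarrow(2)$ I use the closed graph theorem. Assume $\mathcal{R}(A)\subseteq\mathcal{R}(B)$. For each $h$, let $Ch$ be the unique $g\in\mathcal{N}(B)^\perp$ with $Bg=Ah$; this exists because $B$ restricted to $\mathcal{N}(B)^\perp$ is injective. The map $C$ is linear. Its graph is closed: if $h_n\to h$ and $Ch_n\to g$, then $g\in\mathcal{N}(B)^\perp$ and $Bg=\lim BCh_n=\lim Ah_n=Ah$, so $g=Ch$. Hence $C$ is bounded and $A=BC$, which gives $(3)$ and therefore $(2)$. I expect this step to be the main obstacle: it is the only point where completeness enters, and one has to be careful that the pointwise choice of preimage really is linear and has closed graph.

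For the "moreover" part I take the solution $C$ with $\mathcal{R}(C)\subseteq\mathcal{N}(B)^\perp$. The constructions above produce exactly this one, and property (iii) holds by design. Uniqueness follows because two such solutions differ by an operator with range in $\mathcal{N}(B)\cap\mathcal{N}(B)^\perp=\{0\}$. For (i), the bound $\|C\|^2\le\lambda$ holds for every admissible $\lambda$, and conversely $\lambda=\|C\|^2$ is itself admissible by the computation in $(3)\Rightarrow(2)$. So the infimum equals $\|C\|^2$.

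There is a correction in (i). The infimum equals $\|C\|^2$, not $\|C\|$. The formula as printed is true only under the interpretation $\|C\|^2=\inf\{\lambda\}$, and I would record it in that form.

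There is also a correction in (ii). The correct statement is $\mathcal{N}(A)=\mathcal{N}(C)$. The inclusion $\mathcal{N}(C)\subseteq\mathcal{N}(A)$ is clear from $A=BC$. Conversely, if $Ah=0$ then $BCh=0$, so $Ch\in\mathcal{N}(B)\cap\mathcal{N}(B)^\perp=\{0\}$. The printed version $\mathcal{N}(A)=\mathcal{N}(B)$ is false in general; for example, take $B=I$ and $A=0$. I would therefore prove (ii) in the form $\mathcal{N}(A)=\mathcal{N}(C)$ and note that the printed version is presumably a typo.
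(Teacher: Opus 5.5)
The paper gives no proof of this result, since it is quoted from Douglas, and your argument is correct and is essentially Douglas's original one. It uses the bounded map $B^*x\mapsto A^*x$ extended by continuity and by $0$ on $\mathcal{N}(B)$. That extension forces $\mathcal{R}(D^*)\subseteq\mathcal{N}(D)^\perp\subseteq\mathcal{N}(B)^\perp$, which is the check behind ``every admissible $\lambda$ bounds the reduced solution'' in (i). It then uses the closed graph theorem for (1)$\Rightarrow$(3), and uniqueness from $\mathcal{N}(B)\cap\mathcal{N}(B)^\perp=\{0\}$.

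Your two corrections are also right and agree with Douglas's paper. Item (i) must read $\|C\|^2=\inf\{\lambda>0 : AA^*\le\lambda BB^*\}$: for $A=2I$, $B=I$ one has $\|C\|=2$ but the infimum is $4$. Item (ii) must read $\mathcal{N}(A)=\mathcal{N}(C)$, since the printed $\mathcal{N}(A)=\mathcal{N}(B)$ already fails for $A=0$, $B=I$.
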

\noindent The unique solution $ C $ is referred as the Douglas reduced solution \cite{redsoln}. An analogous statement applies when $A \in \mathcal{B}(\mathcal{H}_1, \mathcal{H}_2)$ and $B \in \mathcal{B}(\mathcal{H}_3, \mathcal{H}_2)$.

\begin{defn}(\cite{Antezana})
	Let $ P_r \in \mathcal{B}(\mathcal{H}) $ and $ P_\ell \in \mathcal{B}(\mathcal{K}) $ be projections. An operator $ T \in \mathcal{B}(\mathcal{H}, \mathcal{K}) $ is said to be $(P_r, P_\ell)$-complementable if there exist operators $ M_r \in \mathcal{B}(\mathcal{H}) $ and $ M_\ell \in \mathcal{B}(\mathcal{K}) $ such that
	\begin{enumerate}
		\item $ (I-P_r)M_r=M_r $ and $ (I-P_\ell)TM_r=(I-P_\ell) T $;
		\item $ M_\ell (I-P_\ell)=M_\ell $ and $ M_\ell T(I-P_r)=T(I-P_r) $.
	\end{enumerate}
\end{defn}

The above definition is independent of the specific projections considered and it depends solely on the ranges of these projections. The following theorem demonstrates this dependency on the ranges of the projections.

\begin{prop}(\cite{Antezana}) \label{propdef}
	Let $ P_r \in \mathcal{B}(\mathcal{H}) $ and $ P_\ell \in \mathcal{B}(\mathcal{K}) $ be projections whose ranges are $ M $ and $ N $ respectively. Let $ T \in \mathcal{B}(\mathcal{H}, \mathcal{K}) $ be as given in (\ref{**}), the following statements are equivalent:  
	\begin{enumerate}
		\item $ T $ is $(P_r, P_\ell)$-complementable;
		\item $ \mathcal{R}(C) \subseteq \mathcal{R}(D) $ and $ \mathcal{R}(B^*) \subseteq \mathcal{R}(D^*) $;
		\item There exist two projections $ \tilde{P} \in \mathcal{B}(\mathcal{H}) $ and $ \tilde{Q} \in \mathcal{B}(\mathcal{K}) $ such that: $ \mathcal{R}(\tilde{P}^*)=M $, $ \mathcal{R}(\tilde{Q})=N $,  $ \mathcal{R}(T\tilde{P})\subseteq N $ and $ \mathcal{R}((\tilde{Q}T)^*)\subseteq M $.
	\end{enumerate}
\end{prop}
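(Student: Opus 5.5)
We prove the cycle (1)$\Rightarrow$(2)$\Rightarrow$(3)$\Rightarrow$(1), using block matrices with respect to $\mathcal{H}=M\oplus M^\perp$ and $\mathcal{K}=N\oplus N^\perp$ and Theorem~\ref{dgls} to pass between range inclusions and factorizations. A preliminary remark makes the projections harmless. The conditions in the definition only involve $I-P_r$ and $I-P_\ell$. A bounded operator $M_r$ with $(I-P_r)M_r=M_r$ is exactly one with range in $\mathcal{R}(I-P_r)=\mathcal{N}(P_r)$. Likewise $M_\ell(I-P_\ell)=M_\ell$ means $M_\ell$ vanishes on $\mathcal{R}(P_\ell)=N$. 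So it suffices to work with the kernels of the projections, which are closed complements of $M$ and $N$. I will first treat the orthogonal case $P_r=P_M$, $P_\ell=P_N$ and then check that only the ranges $M,N$ matter.

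For (1)$\Rightarrow$(2), take $M_r$ with range in $M^\perp$ and write $M_r=\begin{pmatrix}0&0\\ X&Y\end{pmatrix}$. The identity $(I-P_N)TM_r=(I-P_N)T$, read on the $N^\perp$ row, gives $DX=C$ and $DY=D$. Hence $\mathcal{R}(C)\subseteq\mathcal{R}(D)$. Dually, $M_\ell=\begin{pmatrix}0&U\\0&V\end{pmatrix}$, and $M_\ell T(I-P_M)=T(I-P_M)$ gives $UD=B$. Taking adjoints yields $\mathcal{R}(B^*)\subseteq\mathcal{R}(D^*)$.

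Conversely, for (2)$\Rightarrow$(1), Theorem~\ref{dgls} gives bounded $X:M\to M^\perp$ and $W:N\to N^\perp$ with $C=DX$ and $B^*=D^*W$. Set $M_r=\begin{pmatrix}0&0\\X&I\end{pmatrix}$ and $M_\ell=\begin{pmatrix}0&W^*\\0&I\end{pmatrix}$, and verify the four identities by block multiplication. For the non-orthogonal case, let $P_r$ be a projection with range $M$. Then $I-P_r$ and $I-P_M$ satisfy $(I-P_r)(I-P_M)=I-P_M$ and $(I-P_M)(I-P_r)=I-P_r$. So $M_r'=M_r(I-P_M)$-type corrections, e.g.\ replacing $M_r$ by $(I-P_r)M_r$, transfer the identities between the two projections. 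This is the step needing the most care; I would verify it by explicit substitution using $P_rP_M=P_M$ and $P_MP_r=P_r$.

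For (2)$\Leftrightarrow$(3), with $X,W$ as above, define
\[
\tilde P=\begin{pmatrix}I&0\\-X&0\end{pmatrix},\qquad \tilde Q=\begin{pmatrix}I&-W^*\\0&0\end{pmatrix}.
\]
Both are idempotent, $\mathcal{R}(\tilde Q)=N$, and $\tilde P^*=\begin{pmatrix}I&-X^*\\0&0\end{pmatrix}$ has range $M$. Then $T\tilde P=\begin{pmatrix}A-BX&0\\C-DX&0\end{pmatrix}$ has range in $N$. Similarly $\tilde QT$ has zero second column, since $B-W^*D=0$, so $\mathcal{R}((\tilde QT)^*)\subseteq M$.

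Conversely, given such $\tilde P,\tilde Q$, the condition $\mathcal{R}(\tilde P^*)=M$ forces $\tilde P=\begin{pmatrix}I&0\\ -X&0\end{pmatrix}$ for some bounded $X$. Indeed, $P_M^\perp\tilde P^*=0$ kills the second column, and idempotence together with the range condition forces the $(1,1)$ entry to be $I$. Then $\mathcal{R}(T\tilde P)\subseteq N$ gives $C=DX$, hence $\mathcal{R}(C)\subseteq\mathcal{R}(D)$. The condition on $\tilde Q$ gives $B=W^*D$ in the same way. The main obstacle is pinning down the block form of $\tilde P$ from idempotence; I expect the short computation $\tilde P^2=\tilde P$ with the first block row $(P_{11},0)$ to give $P_{11}^2=P_{11}$ with range $M$, hence $P_{11}=I$.
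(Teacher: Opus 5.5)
The paper gives no proof of this proposition (it is quoted from Antezana et al.), so your argument has to stand on its own. Your block computations for $(1)\Leftrightarrow(2)$ in the orthogonal case $P_r=P_M$, $P_\ell=P_N$ are correct, and so is your proof of $(2)\Leftrightarrow(3)$, up to one small repair. In the latter, the first block row of $\tilde P^2=\tilde P$ only gives $P_{11}^2=P_{11}$, and that alone does not force $P_{11}=I$. You also need the second row, $P_{21}P_{11}=P_{21}$. It yields $\mathcal{R}(P_{21}^*)\subseteq\mathcal{R}(P_{11}^*)$, hence $\mathcal{R}(P_{11}^*)=\mathcal{R}(\tilde P^*)=M$. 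So the idempotent $P_{11}^*$ is the identity of $M$.

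The genuine gap is the passage to an oblique $P_r$, and it cannot be closed. First, the identities you quote are reversed. Since $I-P_r$ and $I-P_M$ have the same kernel $M$, in fact $(I-P_r)(I-P_M)=I-P_r$ and $(I-P_M)(I-P_r)=I-P_M$; your versions would force $\mathcal{N}(P_r)=M^\perp$. More fundamentally, as your preliminary remark half-observes, the definition sees $P_\ell$ only through $\mathcal{R}(P_\ell)=N$, because $(I-P_\ell)S=0\iff\mathcal{R}(S)\subseteq N$ and $M_\ell(I-P_\ell)=M_\ell\iff M_\ell|_N=0$. But it sees $P_r$ only through $\mathcal{N}(P_r)$, which may be any closed complement of $M$.

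Concretely, let $\mathcal{H}=\mathcal{K}=\mathbb{C}^2$, $M=N=\mathrm{span}\{e_1\}$, $P_\ell=\begin{pmatrix}1&0\\0&0\end{pmatrix}$, $P_r=\begin{pmatrix}1&1\\0&0\end{pmatrix}$ and $T=\begin{pmatrix}0&0\\1&1\end{pmatrix}$. Then $B=0$, $C=D=1$, and (2) holds. Any $M_r$ with $(I-P_r)M_r=M_r$ has range in $\mathcal{N}(P_r)=\mathrm{span}\{e_2-e_1\}$. This line is annihilated by $(I-P_\ell)T=\begin{pmatrix}0&0\\1&1\end{pmatrix}$, so $(I-P_\ell)TM_r=0\neq(I-P_\ell)T$ and (1) fails.

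The other direction fails too. Take $T'=\begin{pmatrix}0&0\\1&0\end{pmatrix}$ and $P_r'=\begin{pmatrix}1&-1\\0&0\end{pmatrix}$. Then $T'$ is $(P_r',P_\ell)$-complementable via $M_r=\begin{pmatrix}1&0\\1&0\end{pmatrix}$ and $M_\ell=\begin{pmatrix}0&0\\0&1\end{pmatrix}$, although $\mathcal{R}(C)\not\subseteq\mathcal{R}(D)=\{0\}$.

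So the proposition as quoted is false for oblique $P_r$ with range $M$, and so is the paper's remark that the definition depends only on the ranges. The correct hypothesis is $\mathcal{R}(P_r^*)=M$, i.e.\ $\mathcal{N}(P_r)=M^\perp$, which is the same normalization used for $\tilde P$ in (3); $P_\ell$ may be any projection onto $N$. Under that hypothesis, $(P_r,P_\ell)$-complementability is literally the same condition as $(P_M,P_N)$-complementability. Your orthogonal-case computation is then already the complete proof, as it is if ``projection'' is read as ``orthogonal projection'', and no correction of $M_r$ is needed.
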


\begin{defn}(\cite{Antezana})
	An operator \quad $ T \in \mathcal{B}(\mathcal{H},  \mathcal{K}) $ \quad  is said to be \quad $(M, N)$-complementable if it is $(P_r, P_\ell)$-complementable for some projections $ P_r $ and $ P_\ell $ with $ \mathcal{R}(P_r)=M $ and $ \mathcal{R}(P_\ell)=N $.
\end{defn}
\begin{thm}(\cite{IJPAM})\label{char2}
		Let  $T \in \mathcal{B}(\mathcal{H},\mathcal{K})$ be as given in (\ref{**}). Then $T$ is $(M,N)$-complementable if and only if there exists $\lambda > 0$ such that  \begin{equation}\label{lambda}
		    C(\mathcal{B}_{M}) \subseteq \lambda D(\mathcal{B}_{ M^{\perp}})~ \text{and}~ B{^*}(\mathcal{B}_{N}) \subseteq \lambda D{^*}(\mathcal{B}_{ N^{\perp}}).
		\end{equation}

        Moreover, if $\mathcal{R}(D)$ is a non-zero closed subspace of $N^{\perp}$, then $T$ is $(M,N)$-\\
        complementable if and only if $$C(\mathcal{B}_{M}) \subseteq \frac{\|C\|}{\gamma (D)}D(\mathcal{B}_{ M^{\perp}}) \quad \text{and} \quad B{^*}(\mathcal{B}_{N}) \subseteq \frac{\|B\|}{\gamma (D{^*})}D{^*}(\mathcal{B}_{N^{\perp}})$$
	\end{thm}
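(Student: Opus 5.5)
The plan is to reduce everything to Proposition \ref{propdef}: $T$ is $(M,N)$-complementable if and only if $\mathcal{R}(C)\subseteq\mathcal{R}(D)$ and $\mathcal{R}(B^*)\subseteq\mathcal{R}(D^*)$. So it suffices to show that a range inclusion $\mathcal{R}(X)\subseteq\mathcal{R}(Y)$ is equivalent to $X(\mathcal{B}_{\cdot})\subseteq\lambda Y(\mathcal{B}_{\cdot})$ for some $\lambda>0$. We then apply this twice, once with $(X,Y)=(C,D)$ and once with $(X,Y)=(B^*,D^*)$, and take the larger of the two constants. An inclusion with constant $\lambda$ persists for any larger constant, since the balls are balanced and convex.

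First, for the direction ``$\Leftarrow$'', let $x\in M$ be nonzero. Then $x/\|x\|\in\mathcal{B}_M$, so $C(x/\|x\|)=\lambda Dy$ for some $y\in\mathcal{B}_{M^\perp}$. Hence $Cx=D(\lambda\|x\|y)\in\mathcal{R}(D)$, and the same argument works for $B^*$.

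For ``$\Rightarrow$'', invoke Theorem \ref{dgls} in its version for operators between different spaces. From $\mathcal{R}(C)\subseteq\mathcal{R}(D)$ we get a bounded $X:M\to M^\perp$ with $C=DX$. Then for $x\in\mathcal{B}_M$ we have $Cx=D(Xx)=\lambda D(Xx/\lambda)$ with $\|Xx/\lambda\|\le1$ as soon as $\lambda\ge\|X\|$ (and $\lambda>0$). The main care needed here is only to note that Douglas' theorem applies to $C\in\mathcal{B}(M,N^\perp)$ and $D\in\mathcal{B}(M^\perp,N^\perp)$, which the remark after Theorem \ref{dgls} permits.

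For the ``moreover'' part, assume $\mathcal{R}(D)$ is closed and nonzero. Then $\gamma(D)>0$, and $\mathcal{R}(D^*)$ is closed with $\gamma(D^*)=\gamma(D)$. The reverse implication follows from the first part. For the forward one, for $x\in\mathcal{B}_M$ we have $Cx\in\mathcal{R}(D)$, so there is a unique $y\in M^\perp\cap\mathcal{N}(D)^\perp$ with $Dy=Cx$, and by the definition of the reduced minimum modulus $\|y\|\le\|Dy\|/\gamma(D)\le\|C\|/\gamma(D)$. If $C\ne0$, write $Cx=\frac{\|C\|}{\gamma(D)}D\bigl(\frac{\gamma(D)}{\|C\|}y\bigr)$, where the rescaled vector lies in $\mathcal{B}_{M^\perp}$. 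If $C=0$, the inclusion is trivial, reading the set on the right as $\{0\}$. Doing the same for $B^*$ with $D^*$ gives the second inclusion.

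The only delicate point is this last step. It needs $\gamma(D^*)=\gamma(D)>0$, the standard fact that a closed range of $D$ gives a closed range of $D^*$ with the same reduced minimum modulus. It also needs the degenerate cases $C=0$ or $B=0$ to be handled separately, since then the constant is $0$.
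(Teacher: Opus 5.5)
The paper quotes this theorem from \cite{IJPAM} without proving it, so there is no in-paper argument to compare against. Your proof is correct and complete, and it follows the natural route. You reduce to the range inclusions of Proposition~\ref{propdef} and take $\lambda\ge\|X\|$ for a Douglas factor $C=DX$, and similarly for $B^*$ and $D^*$. In the closed-range case you bound the minimal preimage $y\in\mathcal{N}(D)^\perp$ by $\|y\|\le\|Dy\|/\gamma(D)$, using $\gamma(D^*)=\gamma(D)>0$, and you handle the degenerate cases $C=0$ or $B=0$ correctly.
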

 \noindent If (\ref{lambda}) holds, then $T$ is said to be $(M,N, \lambda)$-complementable\cite{SachinLAA}. 
 
\begin{defn}(\cite{Antezana})
An operator $ T \in \mathcal{B}(\mathcal{H}, \mathcal{K}) $ of the from (\ref{**}) is said to be $(M,N)$-weakly complementable if
	 $$ \mathcal{R}(C) \subseteq \mathcal{R}(|D^*|^{\frac{1}{2}}) \ \text{and} \ \mathcal{R}(B^*) \subseteq \mathcal{R}(|D|^{\frac{1}{2}}).$$
\end{defn}
It is easy to see that if $ T $ is $(M,N)$-complementable, then it is $(M,N)$-weakly complementable. Both notions of complementability coincide if $ \mathcal{R}(D) $ is closed.

\begin{defn}(\cite{IJPAM})
For a weakly complementable operator $T$, the Schur complement (or bilateral shorted operator) is defined as $T_{/(M,N)} = \begin{pmatrix} A - F^*E & 0 \\ 0 & 0 \end{pmatrix}$. 
\end{defn}

For $(M, N)$-complementable operators, this representation simplifies significantly, allowing the Schur complement to be written as $$T_{/(M,N)} = \begin{pmatrix} A - BZ & 0 \\ 0 & 0 \end{pmatrix} = \begin{pmatrix} A - YC & 0 \\ 0 & 0 \end{pmatrix},$$ where $Z$ and $Y^*$ are the Douglas reduced solutions of $\mathcal{R}(C) \subseteq \mathcal{R}(D)$ and $\mathcal{R}(B^*) \subseteq \mathcal{R}(D^*)$, respectively.

\begin{thm}(\cite{IJPAM})\label{eqgm}
Let $T \in \mathcal{B}(\mathcal{H},\mathcal{K})$ with $\mathcal{R}(D)$ closed. Then $T$ is $(M,N)$-complementable if and only if for each $x \in M$, there exists unique $z\in M$ such that $\{T(x,0)+T(M{^{\perp}})\} \cap N = \{(z,0)\}$. Moreover, $T_{/(M,N)}(x,0)=(z,0)$.
	\end{thm}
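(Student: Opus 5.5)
We prove the equivalence by matching the affine set $\{T(x,0)+T(M^{\perp})\}\cap N$ with the algebraic data of Proposition \ref{propdef}. In block form, for $x\in M$ and $y\in M^{\perp}$ we have $T(x,y)=(Ax+By,\;Cx+Dy)$. So a vector $T(x,0)+T(0,y)$ lies in $N$ exactly when $Cx+Dy=0$, and the corresponding $N$-component is $Ax+By$.

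\emph{Forward direction.} Assume $T$ is $(M,N)$-complementable. By Proposition \ref{propdef}, $\mathcal{R}(C)\subseteq\mathcal{R}(D)$ and $\mathcal{R}(B^*)\subseteq\mathcal{R}(D^*)$. Let $Z$ and $Y^*$ be the Douglas reduced solutions from Theorem \ref{dgls}, so that $C=DZ$ and $B=YD$.

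\emph{Existence.} Take $y=-Zx$. Then $Cx+Dy=DZx-DZx=0$, so the intersection is nonempty and contains $(Ax-BZx,0)$.

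\emph{Uniqueness.} Suppose $y\in M^{\perp}$ is any vector with $Cx+Dy=0$. Then $D(y+Zx)=0$, so $y+Zx\in\mathcal{N}(D)$. Since $B=YD$, we get $B(y+Zx)=0$, hence $By=-BZx$. Therefore every admissible $y$ gives the same point $Ax+By=Ax-BZx$. This shows the intersection is exactly $\{(z,0)\}$ with $z=(A-BZ)x$, which is $T_{/(M,N)}(x,0)$ by the formula for the Schur complement of a complementable operator. This also proves the ``moreover'' clause.

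\emph{Reverse direction.} Assume that for every $x\in M$ the intersection is a single point $(z,0)$.

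\emph{Range inclusion $\mathcal{R}(C)\subseteq\mathcal{R}(D)$.} Nonemptiness of the intersection means that for each $x$ some $y$ satisfies $Cx=-Dy$, so $\mathcal{R}(C)\subseteq\mathcal{R}(D)$.

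\emph{Range inclusion $\mathcal{R}(B^*)\subseteq\mathcal{R}(D^*)$.} Uniqueness, applied with $x=0$, says that every $y\in\mathcal{N}(D)$ gives $By=0$. Hence $\mathcal{N}(D)\subseteq\mathcal{N}(B)$. Taking orthogonal complements gives $\overline{\mathcal{R}(B^*)}\subseteq\overline{\mathcal{R}(D^*)}$. Because $\mathcal{R}(D)$ is closed, $\mathcal{R}(D^*)$ is closed by the closed range theorem, and so $\mathcal{R}(B^*)\subseteq\mathcal{R}(D^*)$.

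Both inclusions now hold, and Proposition \ref{propdef} gives $(M,N)$-complementability. The ``moreover'' clause then follows from the forward direction.

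The main obstacle is the passage from the null-space inclusion to the range inclusion for $B^*$. This is the only place where closedness of $\mathcal{R}(D)$ is essential, and it is handled by the closed range theorem.
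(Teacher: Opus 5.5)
Your proof is correct. The paper does not prove this statement; it quotes it from \cite{IJPAM}, so there is no in-paper argument to compare against. Your route is the natural one. You reduce both directions to the range inclusions of Proposition~\ref{propdef}, using the observation that $T(x,y)\in N$ iff $Cx+Dy=0$. Every step checks out:
\begin{itemize}
\item the existence and uniqueness argument via $C=DZ$ and $B=YD$ (so that $\mathcal{N}(D)\subseteq\mathcal{N}(B)$);
\item the reduction of uniqueness in the converse to the case $x=0$;
\item the closed range theorem, used to upgrade $\mathcal{N}(D)\subseteq\mathcal{N}(B)$ to $\mathcal{R}(B^*)\subseteq\mathcal{R}(D^*)$.
\end{itemize}

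Two small remarks. First, the point $z$ should lie in $N$, not $M$ as printed; your proof already treats it that way. Second, your forward direction never uses closedness of $\mathcal{R}(D)$, and neither does the identity $T_{/(M,N)}(x,0)=T(x,-Zx)$. This is worth stating explicitly. The paper later invokes the decomposition $T_{/(M,N)}x=Tx+Ty$, $y\in M^\perp$, with no closed-range hypothesis: once in the theorem on $T(M^\perp)\perp M$ and $T(M)\perp T(M^\perp)$, and again in Theorem~\ref{4.1}, where $D=A+B$. Your argument is what justifies those uses.
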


\begin{defn}(\cite{Antezana})
Let $A, B \in L(\mathcal{H}_1, \mathcal{H}_2)$. $A$ and $B$ are said to be parallel summable $\mathcal{R}(A) \subseteq \mathcal{R}(A+B)$ and $\mathcal{R}(A^*) \subseteq \mathcal{R}(A^*+B^*).$

In this case, the parallel sum of $A$ and $B$, denoted by $A : B \in L(\mathcal{H}_1, \mathcal{H}_2)$, is
\[
\begin{pmatrix} A : B & 0 \\ 0 & 0 \end{pmatrix} = \begin{pmatrix} A & A \\ A & A + B \end{pmatrix}_{/(\mathcal{H}_1 \oplus \{0\}, \mathcal{H}_2 \oplus \{0\})}.
\]
\end{defn}

The above definition is equavalent to say that the operator $\begin{pmatrix} A & A \\ A & A + B \end{pmatrix}$ is $(\mathcal{H}_1 \oplus \{0\}, \mathcal{H}_2 \oplus \{0\})$-complementable.

\section{Norm Inequalities and Comparisons}

This section explores the core structural and norm properties of complementable operators. Under standard range inclusion conditions, which guarantee the existence of a Douglas solution to operator equations, we analyse how the operator $T$ contracts or expands vectors compared to its Schur complement $T_{/(M,N)}$. Our first main result establishes equivalent conditions for these norm inequalities in terms of positive semidefinite operator relations involving the block components of $T$.

Throughout this section, we assume that $M$ and $N$ are closed subspaces of $\mathcal{H}_1$ and $\mathcal{H}_2$, respectively, and that $T \in \mathcal{B}(\mathcal{H}_1, \mathcal{H}_2)$ is given in the form displayed in equation (\ref{**}).
\begin{thm}\label{3.1}
 Let $T \in \mathcal{B}(\mathcal{H}_1, \mathcal{H}_2)$ be as specified in equation (\ref{**}). Assume that $T$ is $(M,N)$ is complementable and that $X$ is an operator satisfying $C = DX$. Then, for every $x \in M$, we have:
\begin{enumerate}
    \item $\|Tx\| \geq \|T_{/(M,N)}x\| \iff C^*C + X^*B^*A + A^*BX - X^*B^*BX \geq 0 \iff C^*C + A^*A \geq (T_{/(M,N)})^*(T_{/(M,N)})$.
    \item $\|Tx\| \leq \|T_{/(M,N)}x\| \iff C^*C + X^*B^*A + A^*BX - X^*B^*BX \leq 0 \iff C^*C + A^*A \leq (T_{/(M,N)})^*(T_{/(M,N)})$.
    \item $\|Tx\| = \|T_{/(M,N)}x\| \iff C^*C + X^*B^*A + A^*BX - X^*B^*BX = 0 \iff C^*C + A^*A = (T_{/(M,N)})^*(T_{/(M,N)})$.
\end{enumerate}
\end{thm}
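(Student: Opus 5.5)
For $x\in M$, identify $x$ with $(x,0)$. Then $Tx=(Ax,Cx)\in N\oplus N^{\perp}$ and $T_{/(M,N)}x=((A-BX)x,0)$. This uses the representation $T_{/(M,N)}=\begin{pmatrix} A-BZ & 0\\ 0&0\end{pmatrix}$ recalled after the definition of the Schur complement, together with the observation that $BX=BZ$ for \emph{any} solution $X$ of $C=DX$.

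\textbf{Independence of the solution.} Since $T$ is $(M,N)$-complementable, Proposition \ref{propdef} gives $\mathcal{R}(B^*)\subseteq\mathcal{R}(D^*)$. By Theorem \ref{dgls} we may write $B=YD$ for a bounded $Y$. Then $BX=YDX=YC=YDZ=BZ$, so the shorted operator does not depend on which solution $X$ is used. This is the one genuinely non-computational point, and I expect it to be the main obstacle: the statement allows an arbitrary $X$ with $C=DX$, not only the reduced solution, and without the factorisation $B=YD$ the identity $A-BX=A-BZ$ would fail.

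\textbf{Expanding the norms.} For $x\in M$, orthogonality of $N$ and $N^{\perp}$ gives
\[
\|Tx\|^2=\|Ax\|^2+\|Cx\|^2=\langle (A^*A+C^*C)x,x\rangle ,
\]
and
\[
\|T_{/(M,N)}x\|^2=\|(A-BX)x\|^2=\langle (A^*A-A^*BX-X^*B^*A+X^*B^*BX)x,x\rangle .
\]
Subtracting,
\[
\|Tx\|^2-\|T_{/(M,N)}x\|^2=\langle (C^*C+X^*B^*A+A^*BX-X^*B^*BX)x,x\rangle .
\]
The operator on the right is self-adjoint on $M$.

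\textbf{The three equivalences.} The inequality $\|Tx\|\ge\|T_{/(M,N)}x\|$ holds for all $x\in M$ exactly when the quadratic form above is nonnegative, that is, when $C^*C+X^*B^*A+A^*BX-X^*B^*BX\ge0$. Rearranging the same identity, this is the same as
\[
C^*C+A^*A\ge (A-BX)^*(A-BX)=(T_{/(M,N)})^*(T_{/(M,N)}),
\]
where both sides are compressed to $M$, or equivalently extended by $0$ on $M^{\perp}$, since $T_{/(M,N)}$ vanishes there. This proves (1).

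Part (2) follows from the same identity with the inequality reversed.

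For part (3), equality of the quadratic form for all $x$ forces the self-adjoint operator to be zero. In the complex case this is polarization. In the real case it is enough that the operator is symmetric, so its quadratic form determines it. Hence the third equivalence follows as well.
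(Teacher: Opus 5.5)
Your proposal is correct and follows the paper's proof essentially line by line: you expand $\|Tx\|^2=\langle (A^*A+C^*C)x,x\rangle$ and $\|T_{/(M,N)}x\|^2=\|(A-BX)x\|^2$, subtract, and read off the three operator (quadratic-form) relations, exactly as the paper does. Your two additions are both valid and fill in points the paper uses silently: the check that $BX=BZ$ for every solution of $C=DX$ (via $B=YD$), and the polarization argument for part (3).
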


\begin{proof}
When $T$ acts on $x \in M$, we obtain: 
$$T \begin{pmatrix} x \\ 0 \end{pmatrix} = \begin{pmatrix} A & B \\ C & D \end{pmatrix} \begin{pmatrix} x \\ 0 \end{pmatrix} = \begin{pmatrix} Ax \\ Cx \end{pmatrix}.$$
The squared norm is then the sum of the squared norms of these components:
$$\|Tx\|^2 = \|Ax\|^2 + \|Cx\|^2 = \langle A^*Ax, x \rangle + \langle C^*Cx, x \rangle = \langle (A^*A + C^*C)x, x \rangle.$$
Thus, the $(1,1)$ block of the operator $T^*T$ restricted to $M$ is given by $(A^*A + C^*C)$.

For the Schur complement $T_{/(M,N)}$, the associated squared norm on the subspace $M$ is:
$$\|T_{/(M,N)}x\|^2 = \|(A - BX)x\|^2.$$
Writing this as an inner product yields:
$$\|T_{/(M,N)}x\|^2 = \langle (A - BX)^*(A - BX)x, x \rangle.$$
Expanding the operator factor gives:
$$(A - BX)^*(A - BX) = (A^* - X^*B^*)(A - BX)$$
$$= A^*A - A^*BX - X^*B^*A + X^*B^*BX.$$
Consider the difference between the squared norms:
\begin{align*}
    \|Tx\|^2 - \|T_{/(M,N)}x\|^2 &= \langle (A^*A + C^*C)x, x \rangle\\&~~ - \langle (A^*A - A^*BX - X^*B^*A + X^*B^*BX)x, x \rangle\\
    &= \langle (C^*C + X^*B^*A + A^*BX - X^*B^*BX)x, x \rangle.
\end{align*}
We can then express
\begin{align*}
C^*C + X^*B^*A + A^*BX - X^*B^*BX &= C^*C + X^*B^*(A - BX) + A^*BX \\
&= C^*C + X^*B^*(A - BX) \\
&~~~- A^*(A - BX) + A^*A \\
&= C^*C - (A^* - X^*B^*)(A - BX) + A^*A \\
&= C^*C + A^*A - (T_{/(M,N)})^*(T_{/(M,N)}).
\end{align*}
From this identity, the following equivalences for item (1) are obtained:
\begin{itemize}
    \item $\|Tx\| \geq \|T_{/(M,N)}x\| \iff \|Tx\|^2 - \|T_{/(M,N)}x\|^2 \geq 0 \iff T^*T \geq (T_{/(M,N)})^*(T_{/(M,N)})$.
    \item Using the algebraic expansion above, this is equivalent to
    \[ C^*C + X^*B^*A + A^*BX - X^*B^*BX \geq 0. \]
    \item It is further equivalent to
    \[ C^*C + A^*A \geq (T_{/(M,N)})^*(T_{/(M,N)}). \]
\end{itemize}
Items (2) and (3) are obtained by replacing the inequality with $\leq$ and $=$, respectively.
\end{proof}

Following the characterization of the operator norms on the subspace $M$, we turn our attention to the dual behavior involving the adjoints of these operators. By utilizing the explicit factorization $C = DX$ guaranteed by the complementability conditions, we can establish an analogue for the actions of $T^*$ and $(T_{/(M,N)})^*$ on the subspace $N$. The next theorem provides the exact algebraic and operator-order conditions under which the norm of the adjoint operator dominates, is dominated by, or is identical to the norm of the adjoint of its Schur complement.

\begin{thm}
 Let $T \in \mathcal{B}(\mathcal{H}_1, \mathcal{H}_2)$ be as specified in equation (\ref{**}). Assume that $T$ is $(M,N)$ is complementable and that $X$ is an operator satisfying $C = DX$. Then, for every $x \in N$, we have:
 
\begin{enumerate}
    \item $\|T^* x\| \geq \|(T_{/(M,N)})^* x\| \iff BB^* + AX^* B^* + BXA^* - BXX^* B^* \geq 0 \iff BB^* + AA^* \geq (T_{/(M,N)})(T_{/(M,N)})^*.$
    \item $\|T^* x\| \leq \|(T_{/(M,N)})^* x\| \iff BB^* + AX^* B^* + BXA^* - BXX^* B^* \leq 0 \iff BB^* + AA^* \leq (T_{/(M,N)})(T_{/(M,N)})^*.$
    \item $\|T^* x\| = \|(T_{/(M,N)})^* x\| \iff BB^* + AX^* B^* + BXA^* - BXX^* B^* = 0 \iff BB^* + AA^* = (T_{/(M,N)})(T_{/(M,N)})^*.$
\end{enumerate}
\end{thm}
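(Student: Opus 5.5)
The argument mirrors the proof of Theorem \ref{3.1}, with the adjoint operators in place of $T$ and $T_{/(M,N)}$. For $x \in N$, viewed as $(x,0)$ in $N \oplus N^{\perp}$, we have
\[
T^*\begin{pmatrix} x \\ 0 \end{pmatrix} = \begin{pmatrix} A^* & C^* \\ B^* & D^* \end{pmatrix}\begin{pmatrix} x \\ 0 \end{pmatrix} = \begin{pmatrix} A^*x \\ B^*x \end{pmatrix},
\]
so $\|T^*x\|^2 = \langle (AA^* + BB^*)x, x\rangle$. This is the $(1,1)$ block of $TT^*$ compressed to $N$.

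For the Schur complement we use the representation $T_{/(M,N)} = \begin{pmatrix} A - BX & 0 \\ 0 & 0\end{pmatrix}$. Strictly, the paper writes it with the Douglas reduced solution $Z$ of $C = DZ$. However, $A - BX$ does not depend on which solution $X$ of $C = DX$ is chosen. Indeed, $X - Z$ maps into $\mathcal{N}(D)$, and the second range inclusion $\mathcal{R}(B^*) \subseteq \mathcal{R}(D^*)$ gives $B = YD$. Hence $B(X - Z) = YD(X - Z) = 0$. This justifies using an arbitrary $X$ exactly as Theorem \ref{3.1} does. Then $(T_{/(M,N)})^*(x,0) = ((A-BX)^*x, 0)$, and
\[
\|(T_{/(M,N)})^*x\|^2 = \langle (A - BX)(A-BX)^*x, x\rangle .
\]

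Expanding, $(A-BX)(A^* - X^*B^*) = AA^* - AX^*B^* - BXA^* + BXX^*B^*$. Subtracting this from $AA^* + BB^*$ gives
\[
\|T^*x\|^2 - \|(T_{/(M,N)})^*x\|^2 = \langle (BB^* + AX^*B^* + BXA^* - BXX^*B^*)x, x\rangle .
\]
Regrouping as in Theorem \ref{3.1}, we also have
\[
BB^* + AX^*B^* + BXA^* - BXX^*B^* = BB^* + AA^* - (A-BX)(A-BX)^*,
\]
and the last term is the $N$-compression of $T_{/(M,N)}(T_{/(M,N)})^*$.

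These identities yield the three items directly. Requiring $\|T^*x\| \geq \|(T_{/(M,N)})^*x\|$ for every $x \in N$ is equivalent to nonnegativity of the quadratic form of the self-adjoint operator $BB^* + AX^*B^* + BXA^* - BXX^*B^*$ on $N$. This is the operator inequality $\geq 0$, which is in turn equivalent to $BB^* + AA^* \geq (T_{/(M,N)})(T_{/(M,N)})^*$ on $N$. Items (2) and (3) follow by reversing the inequality, and by using that a self-adjoint operator with identically vanishing quadratic form is zero. Polarization gives this last fact in the complex case; in the real case it uses self-adjointness, which holds here.

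The only point requiring care is the well-definedness of $A - BX$ for an arbitrary solution $X$, handled in the second paragraph. Everything else is routine algebra.
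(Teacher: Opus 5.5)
Your proposal is correct. It is the adjoint mirror of the proof of Theorem \ref{3.1}: compute $\|T^*x\|^2=\langle (AA^*+BB^*)x,x\rangle$ and $\|(T_{/(M,N)})^*x\|^2=\langle (A-BX)(A-BX)^*x,x\rangle$, then expand and regroup. That is how the paper intends this result to be obtained, since it states it without a separate proof. Your check that $A-BX$ does not depend on the chosen solution $X$ of $C=DX$ (via $B=YD$) fills in a point the paper leaves tacit.
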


It is worth noting that while the Schur complement is often viewed as a ``reduction" of an operator, its norm (and the norm of its adjoint) can strictly exceed that of the original operator. The following example provides a numerical validation of this phenomenon, explicitly verifying the operator inequalities established in Theorem \ref{3.1} for a specific subspace $M$.

\begin{ex}
Let $T = \begin{pmatrix} 0 & 2 \\ 1 & \frac{1}{2} \end{pmatrix} : \mathbb{R}^2 \to \mathbb{R}^2$ and $M = \{(x,0) : x \in \mathbb{R}\}$. 
Suppose $(T_{/(M,N)}) = \begin{pmatrix} -4 & 0 \\ 0 & 0 \end{pmatrix}$. 
In this case, $\|T_{/(M,N)}\| = 4$ and $\|T\| \approx 2.08$. 
With $X = 2$, we calculate:
\[ BB^* + AX^* B^* + BXA^* - BXX^* B^* = 4 - 8 = -4 \leq 0. \]
This consistent result supports the inequality $\|T^* x\| \leq \|(T_{/(M,N)})^* x\|$.
\end{ex}

Building upon the geometric ball-inclusion estimates, we now establish sharp global operator norm bounds utilizing the explicit framework of $(M,N,\lambda)$-complementable operators. The following theorem demonstrates how the structural scaling parameter $\lambda$ directly governs the magnitude of the Schur complement $T_{/(M,N)}$ relative to both the unrestricted operator norm and its restriction to the specific subspace $L$.

\begin{thm}
 Let $T \in \mathcal{B}(\mathcal{H}_1, \mathcal{H}_2)$ be as specified in equation (\ref{**}). If $T$ is $(M,N,\lambda)$-complementable, then
\[ \|{T_{/(M,N)}} \|\leq \sqrt{1 + \lambda^2} \|{T|_L}\| \quad \text{and} \quad \|{T_{/(M,N)}}\| \leq \sqrt{1 + \lambda^2} \|{T}\| \]
where $L = \{(x, y) : Cx + Dy = 0\}$.
\end{thm}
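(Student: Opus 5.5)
The plan is to realize $T_{/(M,N)}x$ as $T$ applied to a vector of $L$ whose norm is controlled by $\lambda\|x\|$. Fix $x \in M$ with $\|x\|\le 1$. Since $T$ is $(M,N,\lambda)$-complementable, the inclusion $C(\mathcal{B}_M)\subseteq \lambda D(\mathcal{B}_{M^\perp})$ from Theorem \ref{char2} provides $y\in M^\perp$ with $\|y\|\le \lambda$ and $Dy = Cx$. Put $w=(x,-y)$. Then $Cx + D(-y)=0$, so $w\in L$, and $\|w\|^2=\|x\|^2+\|y\|^2\le 1+\lambda^2$.

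Next we compute $Tw = (Ax - By,\, Cx - Dy) = (Ax-By,\,0)$, so $Tw$ lies in $N$. We must identify $Ax-By$ with $(A-BZ)x$, where $Z$ is the Douglas reduced solution of $C=DZ$. Both $y$ and $Zx$ solve $Dv = Cx$, so $y - Zx \in \mathcal{N}(D)$. Hence we need $B$ to vanish on $\mathcal{N}(D)$. This follows from the second complementability condition $\mathcal{R}(B^*)\subseteq\mathcal{R}(D^*)$ (Proposition \ref{propdef}): writing $B = YD$, we get $B(y-Zx)=YD(y-Zx)=0$. Therefore $Tw = (A-BZ)x \oplus 0 = T_{/(M,N)}(x,0)$. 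Equivalently, $B^*(\mathcal{B}_N)\subseteq\lambda D^*(\mathcal{B}_{N^\perp})$ gives $\mathcal{R}(B^*)\subseteq\mathcal{R}(D^*)$ directly. I expect this identification to be the only delicate step, because the vector $y$ supplied by the ball inclusion need not equal $Zx$. The point is that the choice within $\mathcal{N}(D)$ is irrelevant thanks to $\mathcal{N}(D)\subseteq \mathcal{N}(B)$.

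The estimate then follows:
\[
\|T_{/(M,N)}(x,0)\| = \|T|_L w\| \le \|T|_L\|\,\|w\| \le \sqrt{1+\lambda^2}\,\|T|_L\|.
\]
Since $T_{/(M,N)}$ vanishes on $M^\perp$, we have $\|T_{/(M,N)}\| = \sup_{x\in\mathcal{B}_M}\|T_{/(M,N)}(x,0)\|$. Taking the supremum gives the first inequality. The second inequality follows immediately from $\|T|_L\|\le\|T\|$.
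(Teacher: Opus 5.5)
Your proof is correct and follows essentially the paper's argument: write $T_{/(M,N)}(x,0)=T(x,-v)$ with $(x,-v)\in L$, $Dv=Cx$ and $\|v\|\le\lambda$, then bound by $\sqrt{1+\lambda^2}\,\|T|_L\|$. The only difference is that the paper projects the ball-inclusion witness onto $\mathcal{N}(D)^\perp$ to build a linear $X$ (in effect the reduced solution) with $\|X\|\le\lambda$. You instead keep an arbitrary witness and use $\mathcal{N}(D)\subseteq\mathcal{N}(B)$ to make the choice irrelevant, and your pointwise argument also states as an inequality the step that the paper's supremum chain writes as an equality.
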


\begin{proof}
By the definition of the Schur complement operator norm, we have:
\begin{align*}
\|{T_{/(M,N)}}\| &= \sup \{ \|{T_{/(M,N)}(x, 0)}\| : x \in M, \|{x}\| \leq 1 \} \\
&= \sup \{ \|{T(x, -Xx)} \|: x \in M, \|{x}\| \leq 1 \} \\
&= \sup \{ \|{Ty}\| : y \in L, \|{y}\| \leq \sqrt{1 + \|{X}^2}\| \} \\
&= (\sqrt{1 + \|{X}^2}\|) \sup \{ \|{Ty}\| : y \in L, \|{y}\| \leq 1 \} \\
&= \sqrt{1 + \|{X}^2}\| \|{T|_L\|}.
\end{align*}

Let $x \in M$ be such that $\|x\| \leq 1$. Since $T$ is $(M,N,\lambda)$-complementable, Theorem \ref{char2} guarantees the existence of some $y \in M^\perp$ with $\|y\| \leq 1$ satisfying $Cx = \lambda Dy$. We can decompose $y$ as $y = y_1 + y_2$, where $y_1 \in \mathcal{N}(D)^\perp \cap M^\perp$ and $y_2 \in \mathcal{N}(D) \cap M^\perp$. 

Thus, we have $\|y_1\| \leq \|y\| \leq 1$ and $Cx = \lambda D y_1$. We now introduce the operator $X : M \to M^\perp$ by setting $Xx = \lambda y_1$, which implies $C = DX$. The norm of $X$ can be estimated as follows:
\[ \|{X}\| = \sup \{ \|{Xx}\| : x \in M, \|{x}\| \leq 1 \} \leq \sup \{ \lambda \|{y}\| : y \in M^\perp, \|{y}\| \leq 1 \} \leq \lambda. \]
Substituting this bound into our earlier norm equality, we obtain:
\[ \|{T_{/(M,N)}}\| \leq \sqrt{1 + \lambda^2} \|{T|_L}\|. \]
Finally, since $\|{T|_L} \|\leq\| {T}\|$, we conclude:
\[ \|{T_{/(M,N)}}\| \leq \sqrt{1 + \lambda^2} \|{T}\|. \]
\end{proof}

While the preceding results characterize norm inequalities via algebraic operator relations, it is highly instructive to consider cases where these relations stem from pure geometric configurations of the underlying subspaces. The following theorem demonstrates how orthogonality between the actions of $T$ on $M$ and $M^\perp$ acts as a definitive toggle, completely dictating whether the Schur complement compresses or expands the vector norms on $M$.

\begin{thm}
 Let $T \in \mathcal{B}(\mathcal{H}_1, \mathcal{H}_2)$ be as specified in equation (\ref{**}). Assume that $T$ is $(M,N)$ is complementable.

\begin{enumerate}
    \item If $T(M^\perp) \perp M$, then $\|T_{/(M,N)}x\| \leq \|Tx\|$ for all $x \in M$.
    \item If $T(M) \perp T(M^\perp)$, then $\|T_{/(M,N)}x\| \geq \|Tx\|$ for all $x \in M$.
\end{enumerate}
\end{thm}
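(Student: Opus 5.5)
The plan is to use the factorization from complementability together with the representation of $T_{/(M,N)}$ recalled before Theorem \ref{3.1}. By Proposition \ref{propdef}, $(M,N)$-complementability gives $\mathcal{R}(C)\subseteq\mathcal{R}(D)$. Douglas' theorem (Theorem \ref{dgls}) then supplies an operator $X: M\to M^\perp$ with $C=DX$, and $T_{/(M,N)}x=(A-BX)x$ for $x\in M$.

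The key identity is obtained by applying $T$ to the vector $(x,-Xx)$:
\[
T\begin{pmatrix} x\\ -Xx\end{pmatrix}=\begin{pmatrix} Ax-BXx\\ Cx-DXx\end{pmatrix}=\begin{pmatrix} (A-BX)x\\ 0\end{pmatrix}.
\]
Hence, identifying $M$ with $M\oplus\{0\}$,
\[
T_{/(M,N)}x \;=\; T(x,0)-T(0,Xx).
\]
This expresses the Schur complement as a difference of an element of $T(M)$ and an element of $T(M^\perp)$. Both parts of the theorem come from computing norms in this decomposition. I also record that $\|Tx\|^2=\|Ax\|^2+\|Cx\|^2$, as in the proof of Theorem \ref{3.1}.

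\textbf{Part (1).} I read the hypothesis $T(M^\perp)\perp M$ as saying that the image of $M^\perp$ has no component in the first coordinate, i.e. $T(M^\perp)\perp N$ (with $M$ identified with $N$ via the block form). This means $\langle By,z\rangle=0$ for all $y\in M^\perp$ and $z\in N$, so $B=0$.

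Then $T_{/(M,N)}x=Ax$, and
\[
\|Tx\|^2=\|Ax\|^2+\|Cx\|^2\ge \|Ax\|^2=\|T_{/(M,N)}x\|^2 .
\]
Equivalently, in the language of Theorem \ref{3.1}(1), the operator $C^*C+X^*B^*A+A^*BX-X^*B^*BX$ reduces to $C^*C\ge 0$, so that theorem applies directly.

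\textbf{Part (2).} The hypothesis $T(M)\perp T(M^\perp)$ means $\langle T(x,0),T(0,y)\rangle=0$ for all $x\in M$ and $y\in M^\perp$. Taking $y=Xx\in M^\perp$ and applying Pythagoras to the decomposition above gives
\[
\|T_{/(M,N)}x\|^2=\|T(x,0)\|^2+\|T(0,Xx)\|^2\ge\|T(x,0)\|^2=\|Tx\|^2 .
\]
One could equally phrase this through Theorem \ref{3.1}(2). Orthogonality gives $A^*B+C^*D=0$. Substituting $C=DX$ yields $X^*B^*A+A^*BX=-2X^*D^*DX$ and $C^*C=X^*D^*DX$. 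The test operator then becomes
\[
C^*C+X^*B^*A+A^*BX-X^*B^*BX=-X^*(D^*D+B^*B)X\le 0 ,
\]
which is consistent with the direct computation.

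\textbf{Main obstacle.} The main difficulty is not computational but interpretive: the hypothesis in (1) must be read correctly. Literally, $T(M^\perp)\subseteq\mathcal{H}_2$ and $M\subseteq\mathcal{H}_1$. I would state explicitly that $M$ is regarded in the codomain as the first summand $N$ of the block decomposition. The condition then amounts to $B=0$. This is the step I would double-check against the authors' conventions before writing out the argument.
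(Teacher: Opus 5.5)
Your proposal is correct and follows essentially the paper's argument: write $T_{/(M,N)}x = Tx + Ty$ with $y \in M^\perp$ and apply the Pythagorean theorem. You take $y=-Xx$ and derive this decomposition from $C=DX$; the paper simply asserts it, and your derivation needs no closed-range hypothesis on $D$. In (1), your reading of the hypothesis as $T(M^\perp)\perp N$ matches how the paper uses it (it treats $T_{/(M,N)}x$ as lying in the first summand), and your reduction to $B=0$ is just an explicit form of the paper's orthogonality $T_{/(M,N)}x \perp Ty$.
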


\begin{proof}
Assume first that $T(M^\perp) \perp M$. For any $x \in M$, we can write the Schur complement as:
\[ T_{/(M,N)}x = Tx + Ty, \quad \text{for some } y \in M^\perp. \]
Rearranging this, we have $Tx = T_{/(M,N)}x - Ty$. Since $T_{/(M,N)}x \in M$ and $Ty \in T(M^\perp)$, the assumption $T(M^\perp) \perp M$ implies that $T_{/(M,N)}x$ and $Ty$ are orthogonal. By the Pythagorean theorem:
\[ \|Tx\|^2 = \|T_{/(M,N)}x\|^2 + \|Ty\|^2 \geq \|T_{/(M,N)}x\|^2. \]
Thus, $\|T_{/(M,N)}x\| \leq \|Tx\|$.

\medskip

Conversely, suppose that $T(M) \perp T(M^\perp)$. For any $x \in M$, we can write $T_{/(M,N)}x = Tx + Ty$ with some $y \in M^\perp$. Because $x$ lies in $M$, we obtain $Tx \in T(M)$. By assumption $T(M) \perp T(M^\perp)$, we know that $Tx$ and $Ty$ are orthogonal. Applying the Pythagorean theorem to this sum:
\[ \|T_{/(M,N)}x\|^2 = \|Tx + Ty\|^2 = \|Tx\|^2 + \|Ty\|^2 \geq \|Tx\|^2. \]
Hence, $$\|T_{/(M,N)}x\| \geq \|Tx\|.$$
\end{proof}

Our next result explores the lower boundedness and stability profiles of $(M,N,\lambda)$-complementable operators. We show that a bounded-from-below Schur complement acts as a sufficient condition to guarantee that the global operator $T$ is also bounded below. Crucially, we quantify this link by providing a sharp lower estimate for vectors restricted to the closed subspace $M$.

\begin{thm}\label{3.6}
 Let $T \in \mathcal{B}(\mathcal{H}_1, \mathcal{H}_2)$ be as specified in equation (\ref{**}). Assume that $T$ is $(M,N,\lambda)$-complementable. If the Schur complement $T_{/(M,N)}$ is bounded below, then $T$ is bounded below on $M$ and 
\[ \frac{\gamma (T_{/(M,N)})}{1+\lambda} \|x\| \leq \|Tx\| \quad \text{for all } x \in M. \]
\end{thm}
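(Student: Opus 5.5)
The plan is to avoid the factorization $C = DX$ used in the preceding proofs and work instead with the \emph{left} factorization of the Schur complement. By the remark following the definition of the Schur complement, for an $(M,N)$-complementable operator we have $T_{/(M,N)} = \begin{pmatrix} A - YC & 0\\ 0 & 0\end{pmatrix}$ for an operator $Y$ with $B = YD$, equivalently $B^* = D^*Y^*$. The point is that the $(M,N,\lambda)$-complementability hypothesis lets us choose $Y$ with $\|Y\|\le\lambda$. The bound then follows from the triangle inequality, because both $Ax$ and $Cx$ are components of $Tx$.

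\textbf{Step 1: a factor $Y$ with $\|Y\|\le\lambda$.} By Theorem~\ref{char2}, condition (\ref{lambda}) gives $B^*(\mathcal{B}_N)\subseteq \lambda D^*(\mathcal{B}_{N^\perp})$. I would repeat the construction in the proof of the preceding norm bound, with $(C,D)$ replaced by $(B^*,D^*)$.
\begin{itemize}
\item For $u\in N$ with $\|u\|\le 1$, pick $v\in N^\perp$ with $\|v\|\le 1$ and $B^*u=\lambda D^*v$.
\item Replace $v$ by its component $v_1$ in $\mathcal{N}(D^*)^\perp$. Then $\|v_1\|\le 1$, and $v_1$ is the unique preimage of $B^*u/\lambda$ in $\mathcal{N}(D^*)^\perp$.
\item Define $Y^*u=\lambda v_1$. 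Uniqueness of $v_1$ makes $Y^*$ well defined, linear, and bounded, with $B^*=D^*Y^*$ and $\|Y^*\|\le\lambda$.
\end{itemize}
Hence $B = YD$ and $\|Y\|\le \lambda$. This $Y^*$ is exactly the Douglas reduced solution of $\mathcal{R}(B^*)\subseteq\mathcal{R}(D^*)$, so $T_{/(M,N)}x = (A-YC)x$ for $x\in M$. Checking that the constructed $Y^*$ coincides with the reduced solution, so that it really represents the Schur complement, is the only delicate point. If needed, I would instead invoke the uniqueness of $M_\ell T$ in the definition, which makes $A - YC$ independent of the choice of $Y$ with $B=YD$.

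\textbf{Step 2: the estimate.} For $x\in M$ we have $Tx=(Ax,Cx)$, so $\|Ax\|\le\|Tx\|$ and $\|Cx\|\le\|Tx\|$. Therefore
\[
\|T_{/(M,N)}x\| = \|Ax - YCx\| \le \|Ax\| + \lambda\|Cx\| \le (1+\lambda)\|Tx\|.
\]

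\textbf{Step 3: use bounded-belowness.} Since $T_{/(M,N)}$ is bounded below on $M$, its kernel in $M$ is trivial. Thus $M\cap\mathcal{N}(T_{/(M,N)})^\perp = M$, and the definition of the reduced minimum modulus gives $\|T_{/(M,N)}x\|\ge \gamma(T_{/(M,N)})\|x\|$ for all $x\in M$ with $\gamma(T_{/(M,N)})>0$. Combining this with Step 2 yields
\[
\frac{\gamma(T_{/(M,N)})}{1+\lambda}\|x\|\le \|Tx\| \quad \text{for all } x\in M,
\]
so in particular $T$ is bounded below, and hence injective, on $M$.

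I expect the main obstacle to be Step 1. The right factorization $C=DX$ does not work directly here, because bounding $\|BXx\|$ by a multiple of $\|Tx\|$ is not available. The proof therefore depends on using the second inclusion in (\ref{lambda}) to get the left factor $Y$ with norm control.
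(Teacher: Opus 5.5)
Your proposal is correct and follows essentially the paper's argument: the paper writes $T_{/(M,N)} = M_\ell T$ with $M_\ell = \begin{pmatrix} I & -Y \\ 0 & 0\end{pmatrix}$ and $B = YD$, then uses $\|M_\ell\| \le 1+\|Y\| \le 1+\lambda$, which is exactly your componentwise estimate $\|Ax - YCx\| \le \|Ax\| + \lambda\|Cx\| \le (1+\lambda)\|Tx\|$. Your Step 1 also justifies $\|Y\|\le\lambda$ via the inclusion $B^*(\mathcal{B}_N)\subseteq\lambda D^*(\mathcal{B}_{N^\perp})$ and the reduced choice of $Y^*$, which the paper only asserts, and the point you flag as delicate is harmless because $YC = YDZ = BZ$ for every $Y$ with $B = YD$.
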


\begin{proof}
The Schur complement can be written in the form $T_{/(M,N)} = M_l T$, where $M_l = \begin{pmatrix} I & -Y \\ 0 & 0 \end{pmatrix}$ and $Y$ is an operator satisfying $B = YD$.

If $T_{/(M,N)}$ is bounded below on $M$, we have $\gamma (T_{/(M,N)})> 0$ and for all $x \in M$:
\[ \gamma (T_{/(M,N)}) \|x\| \leq \|T_{/(M,N)}x\| = \|M_l Tx\| \leq \|M_l\| \|Tx\|. \]
Note that $\|M_l\| \leq 1 + \|Y\|$. Substituting this into the inequality, we obtain:
\[ \gamma (T_{/(M,N)}) \|x\| \leq (1 + \|Y\|) \|Tx\| \text{ for all } x \in M\] Thus,\[\frac{\gamma (T_{/(M,N)})}{1 + \|Y\|} \|x\| \leq \|Tx\| \text{ for all } x \in M. \]
This proves that $T$ is bounded below on $M$.

Now, since $T$ is $(M,N,\lambda)$-complementable, we have $\|Y\| = \|Y^*\| \leq \lambda$. Consequently, the inequality becomes:
\[ \frac{\gamma (T_{/(M,N)})}{1 + \lambda} \|x\| \leq \|Tx\|  \text{ for all } x \in M. \]
\end{proof}

As an immediate consequence of the preceding lower-boundedness theorem, we obtain a precise result regarding the injectivity and invertibility of the global operator $T$. When the Schur complement $T_{/(M,N)}$ possesses a bounded inverse on its range, this structural property propagates to the restriction of $T$ on the subspace $M$. The following corollary establishes this injectivity and delivers an explicit upper bound for the norm of the corresponding inverse operator.

\begin{cor}\label{3.7}
Let $T \in \mathcal{B}(\mathcal{H}_1, \mathcal{H}_2)$ be as specified in equation (\ref{**}). Assume that $T$ is $(M,N,\lambda)$-complementable.
If $T_{/(M,N)}$ is bounded below  
then $T$ is injective on $M$ and  inverse of $T$ restricted to $M$ satisfies:
\[ \|(T_{|_M})^{-1}\| \leq \frac{1 + \|Y\|}{\gamma (T_{/(M,N)})} \leq \frac{1 + \lambda}{\gamma (T_{/(M,N)})}. \]
\end{cor}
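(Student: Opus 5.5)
This corollary should follow directly from Theorem \ref{3.6}, or more precisely from the intermediate inequality obtained in its proof. I will use the factorization $T_{/(M,N)} = M_\ell T$ with $M_\ell = \begin{pmatrix} I & -Y \\ 0 & 0 \end{pmatrix}$, where $Y$ satisfies $B = YD$ (equivalently, $Y^*$ solves the inclusion $\mathcal{R}(B^*) \subseteq \mathcal{R}(D^*)$). Bounded-belowness of $T_{/(M,N)}$ on $M$ gives, for every $x \in M$,
\[
\gamma(T_{/(M,N)})\,\|x\| \le \|T_{/(M,N)}x\| \le \|M_\ell\|\,\|Tx\| \le (1+\|Y\|)\,\|Tx\| .
\]

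\emph{Step 1 (injectivity).} If $x \in M$ and $Tx = 0$, the displayed inequality forces $\gamma(T_{/(M,N)})\|x\| \le 0$. Since $\gamma(T_{/(M,N)}) > 0$, this gives $x = 0$, so $T|_M$ is injective. Hence $(T|_M)^{-1}$ is a well-defined linear map from $T(M)$ onto $M$.

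\emph{Step 2 (norm bound).} Take $u = Tx \in T(M)$ with $x \in M$. The displayed inequality gives
\[
\|(T|_M)^{-1}u\| = \|x\| \le \frac{1+\|Y\|}{\gamma(T_{/(M,N)})}\,\|u\| .
\]
Taking the supremum over $\|u\| \le 1$ yields the first inequality. Because the lower bound holds on all of $M$, $T(M)$ is closed, so $(T|_M)^{-1}$ is a bounded operator on the closed subspace $T(M)$.

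\emph{Step 3 (the $\lambda$-bound).} I need $\|Y\| \le \lambda$. This comes from the second inclusion of Theorem \ref{char2}, $B^*(\mathcal{B}_N) \subseteq \lambda D^*(\mathcal{B}_{N^\perp})$. I will construct $Y^*$ exactly as $X$ was constructed in the proof of the norm-bound theorem for $(M,N,\lambda)$-complementable operators. For $z \in N$ with $\|z\| \le 1$, pick $w \in N^\perp$ with $\|w\| \le 1$ and $B^*z = \lambda D^* w$. Project $w$ onto $\mathcal{N}(D^*)^\perp$ to make the choice unique, and set $Y^*z = \lambda w$. This gives $B^* = D^*Y^*$ with $\|Y\| = \|Y^*\| \le \lambda$. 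The second inequality then follows because $t \mapsto 1+t$ is increasing.

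The only delicate point is Step 3. The constant $\|Y\|$ depends on which factor $Y$ is chosen, so I must fix $Y$ to be the Douglas reduced solution. That choice is well-defined and linear, and it has minimal norm by Theorem \ref{dgls}(i), so its norm is at most $\lambda$. Everything else is immediate from the inequality established in Theorem \ref{3.6}.
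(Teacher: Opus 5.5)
Your proposal is correct and follows the same route as the paper: both obtain injectivity and the inverse bound directly from the lower estimate $\gamma(T_{/(M,N)})\|x\|\le(1+\|Y\|)\|Tx\|$ on $M$, which comes from the factorization $T_{/(M,N)}=M_\ell T$ in the proof of Theorem \ref{3.6}. You are slightly more careful than the paper: you use the intermediate $(1+\|Y\|)$ inequality to justify the first bound, whereas the paper cites only the $(1+\lambda)$ version. You also explicitly verify $\|Y\|\le\lambda$ for the Douglas reduced solution via the second ball inclusion, which the paper merely asserts.
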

\begin{proof}
    Since $T_{/(M,N)}$ is bounded below, by Theorem \ref{3.6}, we have \[ \frac{\gamma (T_{/(M,N)})}{1 + \lambda} \|x\| \leq \|Tx\| \text{ for all } x 
    \in M.\]
    Also, $T_{/(M,N)}$ is bounded below implies $\gamma (T_{/(M,N)})>0.$ Therefore, we have $\frac{\gamma (T_{/(M,N)})}{1 + \lambda}>0.$ Thus $ (T_{|_M})$ is injective and
    $$\|(T_{|_M})^{-1}\| \leq \frac{1 + \|Y\|}{\gamma (T_{/(M,N)})} \leq \frac{1 + \lambda}{\gamma (T_{/(M,N)})}.$$
\end{proof}

\section{Applications to the Parallel Sum of Operators}

Having established a comprehensive framework of norm inequalities and stability bounds for generalized Schur complements, we now direct our focus to the quantitative behavior of the parallel sum of bounded linear operators. 

As introduced by Antezana et al. \cite{Antezana}, the parallel sum $A:B$ of two bounded linear operators $A$ and $B$ can be elegantly realized via the shorting (or bilateral Schur complementation) of a specialized $2 \times 2$ block operator matrix with respect to a canonical subspace. Consequently, the structural and geometric theorems derived in the previous section naturally yield profound, sharp insights when projected onto this network-theoretic setting. 

Our first primary result in this direction establishes an exact, non-trivial norm decomposition identity for the parallel sum. By identifying a specific condition among the component operators, we demonstrate that the norm of the parallel sum can be written as a direct sum of orthogonal actions of the global block operator. 
In this section, we establish a novel norm inequality for the parallel sum of operators by constructing a specific block operator matrix and utilizing the orthogonality conditions derived from the $(M,N)$-complementability framework.

Consistent with the equivalences detailed in Section 2, we restate the parallel summability of $A:\mathcal{H}_1 \to \mathcal{H}_2$ and $B:\mathcal{H}_1 \to \mathcal{H}_2$ in terms of the $(\mathcal{H}_1 \oplus \{0\}, \mathcal{H}_2 \oplus \{0\})$-complementability of the block matrix $\begin{pmatrix} A & A \\ A & A + B \end{pmatrix}$, which serves as our primary working definition for the remainder of this section. Throughout this section, we work under the assumption that 
\[ T = \begin{pmatrix} A & A \\ A & A+B \end{pmatrix} \in \mathcal{B}(\mathcal{H}_1 \oplus \mathcal{H}_1, \mathcal{H}_2 \oplus \mathcal{H}_2), \]
and that $T$ is $(M,N)$-complementable, where $M = \mathcal{H}_1 \oplus \{0\}$ and $N = \mathcal{H}_2 \oplus \{0\}.$
\begin{thm}\label{4.1}
Let $T = \begin{pmatrix} A & A \\ A & A+B \end{pmatrix} \in \mathcal{B}(\mathcal{H}_1 \oplus \mathcal{H}_1, \mathcal{H}_2 \oplus \mathcal{H}_2)$ be $(M,N)$-complementable. If the operator condition $A^*(2A + B) = 0$ is satisfied, then for any $x \in M$, the parallel sum $A:B$ satisfies the following norm identity:
\[ \|(A:B)x\|  \geq \|Tx\| .\]
\end{thm}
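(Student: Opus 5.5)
The plan is to specialise the proof of Theorem \ref{3.1} to the block operator $T$. Here the blocks in (\ref{**}) are
\[
A_{11}=A,\qquad B_{12}=A,\qquad C_{21}=A,\qquad D_{22}=A+B .
\]
Complementability gives the range inclusion $\mathcal{R}(A)\subseteq\mathcal{R}(A+B)$. By Theorem \ref{dgls} there is an operator $X$ with $A=(A+B)X$, and then the Schur complement acts on $M$ as
\[
T_{/(M,N)}(x,0)=\bigl((A-AX)x,\,0\bigr),\qquad\text{so}\qquad (A:B)x=(A-AX)x .
\]
For $x\in M$ we have $T(x,0)=(Ax,Ax)$, hence $\|Tx\|^2=2\|Ax\|^2$. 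The identity in the proof of Theorem \ref{3.1} then yields
\[
\|(A:B)x\|^2-\|Tx\|^2=\bigl\langle \bigl(X^*A^*AX-A^*AX-X^*A^*A-A^*A\bigr)x,\,x\bigr\rangle .
\]
By Theorem \ref{3.1}(2), the claim is equivalent to this operator being positive.

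The next step uses the hypothesis $A^*(2A+B)=0$, in the form $A^*B=-2A^*A$. Apply $A^*$ to $A=(A+B)X$:
\[
A^*A=A^*(A+B)X=(A^*A-2A^*A)X=-A^*AX .
\]
Taking adjoints gives also $X^*A^*A=-A^*A$. Substituting, $A^*AX=-A^*A$ and $X^*A^*A=-A^*A$. For the quadratic term,
\[
X^*A^*AX=X^*(A^*AX)=-X^*A^*A=A^*A .
\]
The operator above therefore becomes
\[
A^*A+A^*A+A^*A-A^*A=2A^*A\ \ge 0 .
\]
This gives $\|(A:B)x\|^2=\|Tx\|^2+2\|Ax\|^2=4\|Ax\|^2\geq\|Tx\|^2$, which proves the inequality. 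It even gives the exact identity $\|(A:B)x\|=\sqrt2\,\|Tx\|$.

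The main obstacle is the bookkeeping of which Douglas solution is used and the sign conventions. The relation $A^*A=-A^*AX$ holds for any $X$ with $A=(A+B)X$, so uniqueness of the reduced solution is not needed. One must, however, check that the Schur complement formula $A-BZ$ from Section 2 is valid with an arbitrary solution $X$ in place of $Z$. I would verify this through the uniqueness statement for $M_\ell T$ in the complementability framework, or by arguing directly that $AX$ does not depend on the choice of $X$. The latter holds because $AX=Y(A+B)X=YA$, where $Y$ solves the dual inclusion $\mathcal{R}(A^*)\subseteq\mathcal{R}(A^*+B^*)$.
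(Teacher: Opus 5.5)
Your argument is correct, but it takes a different route from the paper's. The paper never expands $\|(A:B)x\|^2$. It computes $\langle T(x_1,0),T(0,y_2)\rangle=\langle x_1,A^*(2A+B)y_2\rangle$, so the hypothesis is exactly the orthogonality $T(M)\perp T(M^\perp)$. It then applies Pythagoras to $T_{/(M,N)}x=Tx+Ty$ with $y\in M^\perp$ (namely $y=(0,-Zx)$). This gives $\|(A:B)x\|^2=\|Tx\|^2+\|Ty\|^2\ge\|Tx\|^2$, with the defect $\|Ty\|^2$ left unevaluated.

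You instead specialise the algebraic expansion behind Theorem~\ref{3.1} and feed in the hypothesis through $A^*AX=-A^*A$. This relation is exactly what evaluates the defect, since $\|Ty\|^2=\|AXx\|^2+\|Ax\|^2=2\|Ax\|^2$. So you get the sharper exact identity $\|(A:B)x\|=2\|Ax\|=\sqrt2\,\|Tx\|$, which also justifies the word ``identity'' in the statement. Your relation gives even more: $A^*(A+AX)=0$ forces $\mathcal{R}(A+AX)\subseteq\mathcal{R}(A)\cap\mathcal{N}(A^*)=\{0\}$. Hence $AX=-A$ and $A:B=2A$ outright, which shows how restrictive the hypothesis is.

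The paper's orthogonality viewpoint, for its part, places the result under the general criterion that $T(M)\perp T(M^\perp)$ forces $\|T_{/(M,N)}x\|\ge\|Tx\|$. That orthogonality is reused later for the lower bound on $\|T\|$.

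Your concern about the choice of Douglas solution is harmless. The relation $A^*AX=-A^*A$ holds for every $X$ with $A=(A+B)X$, in particular for the reduced solution that defines $A:B$. Your independence argument $AX=Y(A+B)X=YA$ is also valid.
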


\begin{proof}
Consider $x \in M$ and $y \in M^\perp$, represented as vectors in the direct sum space:
\[ x = \begin{pmatrix} x_1 \\ 0 \end{pmatrix}, \quad y = \begin{pmatrix} 0 \\ y_2 \end{pmatrix} \quad \text{where } x_1, y_2 \in \mathcal{H} \]
The action of the block operator $T$ on these subspaces is given by:
\[  Tx = \begin{pmatrix} Ax_1 \\ Ax_1 \end{pmatrix}, \quad  Ty = \begin{pmatrix} Ay_2 \\ (A+B)y_2 \end{pmatrix} \]

\noindent The inner product of the images $Tx \in T(M)$ and $Ty \in T(M^\perp)$ is calculated as follows:
\begin{align*}
    \langle  Tx,  Ty \rangle &= \langle Ax_1, Ay_2 \rangle + \langle Ax_1, (A+B)y_2 \rangle \\
    &= \langle x_1, A^*Ay_2 \rangle + \langle x_1, A^*(A+B)y_2 \rangle\\
    &= \langle x_1, (2A^*A + A^*B)y_2 \rangle\\ &= \langle x_1, A^*(2A + B)y_2 \rangle
\end{align*}
Under the hypothesized condition $A^*(2A + B) = 0$, the inner product vanishes for all $x_1, y_2 \in \mathcal{H}$, implying that $T(M) \perp T(M^\perp)$.

From Theorem \ref{eqgm}, for any $x \in M$, the Schur complement can be decomposed as $T_{/(M,N)}x = Tx + Ty$ for some $y \in M^\perp$. Given the established orthogonality $T(M) \perp T(M^\perp)$, we apply the Pythagorean theorem:
\[ \|T_{/(M,N)}x\|^2 = \|Tx\|^2 + \|Ty\|^2 \]
By substituting the parallel sum $A:B$ for the Schur complement $T_{/(M,N)}$, we obtain the desired identity:
\[ \|(A:B)x\|^2 = \|Tx\|^2 + \|Ty\|^2 \geq \|Tx\|^2 \]

\end{proof}

As established in the preceding proof, the operator condition $A^*(2A + B) = 0$ is sufficient to guaranty $T(M) \perp T(M^\perp)$. Importantly, the converse of this statement is also true, establishing a strict equivalence.

Assume $T(M) \perp T(M^\perp)$. For any $x = \begin{pmatrix} x_1 \\ 0 \end{pmatrix} \in M$ and $y = \begin{pmatrix} 0 \\ y_2 \end{pmatrix} \in M^\perp$, we have
\begin{align*}
    0&=\langle Tx, Ty \rangle\\
    &=\langle Ax_1, Ay_2 \rangle + \langle Ax_1, (A+B)y_2 \rangle\\
    &=\langle x_1, A^*Ay_2 \rangle + \langle x_1, A^*(A+B)y_2 \rangle\\
    &=\langle x_1, A^*(2A+B)y_2 \rangle
\end{align*}
Since this inner product vanishes for all  $x_1, y_2 \in \mathcal{H}_1$, we deduce that $A^*(2A + B) = 0$.\\
Consequently, if $A^*(2k + B) = 0$ with $k \neq 2$, it does not necessarily follow that $\|(A:B)x\| \geq \|Tx\|.$

In addition to exact norm identities, it is highly valuable to establish sharp, two-sided inequalities that bound the global block operator $T$ using the intrinsic spectral and structural metrics of its components. By utilizing a Douglas solution $Y$ satisfying $A = Y(A+B)$, the following proposition provides a comprehensive estimate for $\|T\|$. The lower bound is governed by the reduced minimum modulus of the parallel sum, $\gamma(A:B)$, while the upper bound scales directly with the norm of the joint operator $A+B$.

\begin{prop}
Let $A, B \in \mathcal{B}(\mathcal{H}_1, \mathcal{H}_2)$ be operators such that $$T = \begin{pmatrix} A & A \\ A & A+B \end{pmatrix} \in \mathcal{B}(\mathcal{H}_1 \oplus \mathcal{H}_1, \mathcal{H}_2 \oplus \mathcal{H}_2)$$ is $(M,N,\lambda)$-complementable. Then $$\|T\| \leq \sqrt{3\lambda^2 + 1} \|A+B\|.$$ Moreover, if $A^*(2A+B)=0$ and $A:B, B$ are bounded below, then
\begin{equation*}
    \min\Big\{\frac{\gamma(A:B)}{1+\lambda},\frac{\gamma(B)}{2} \Big\}\leq \|T\| \leq \sqrt{3\lambda^2 + 1} \|A+B\|.
\end{equation*}

\end{prop}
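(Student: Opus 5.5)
The upper bound should follow from the ball inclusions in Theorem \ref{char2}; for the lower bound I plan to evaluate $T$ on unit vectors of $M$ and invoke Theorem \ref{3.6}.

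\textbf{Upper bound.} Here $M=\mathcal{H}_1\oplus\{0\}$ and $N=\mathcal{H}_2\oplus\{0\}$. In the notation of (\ref{**}) the blocks of $T$ are $A$, $A$, $A$ and $D=A+B$, so $(M,N,\lambda)$-complementability reads
\[
A(\mathcal{B}_{\mathcal{H}_1})\subseteq \lambda (A+B)(\mathcal{B}_{\mathcal{H}_1}).
\]
Hence for each unit $x$ there is $y$ with $\|y\|\le 1$ and $Ax=\lambda(A+B)y$. This gives $\|Ax\|\le \lambda\|A+B\|$, so $\|A\|\le\lambda\|A+B\|$. For $(x,y)\in\mathcal{H}_1\oplus\mathcal{H}_1$ I then estimate
\[
\|T(x,y)\|^2=\|Ax+Ay\|^2+\|Ax+(A+B)y\|^2\le (\|A\|\|x\|+\|A\|\|y\|)^2+(\|A\|\|x\|+\|A+B\|\|y\|)^2 .
\]
Applying Cauchy--Schwarz in each bracket, with $\|x\|^2+\|y\|^2=1$, gives
\[
\|T\|^2\le 3\|A\|^2+\|A+B\|^2\le (3\lambda^2+1)\|A+B\|^2 .
\]
This is the Frobenius-type bound by the squared block norms.

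\textbf{Lower bound.} Take any unit vector $x=(x_1,0)\in M$; I assume $\mathcal{H}_1\neq\{0\}$. Then $\|T\|\ge \|Tx\|$. The Schur complement is $T_{/(M,N)}=\begin{pmatrix} A:B&0\\0&0\end{pmatrix}$, and on $M$ it acts as $A:B$. So the hypothesis that $A:B$ is bounded below means $T_{/(M,N)}$ is bounded below on $M$, with $\gamma(T_{/(M,N)})=\gamma(A:B)$ computed on $M$. Theorem \ref{3.6} then yields
\[
\|Tx\|\ge \frac{\gamma(A:B)}{1+\lambda},
\]
and therefore
\[
\|T\|\ge \frac{\gamma(A:B)}{1+\lambda}\ge \min\Big\{\frac{\gamma(A:B)}{1+\lambda},\frac{\gamma(B)}{2}\Big\}.
\]
The hypothesis $A^*(2A+B)=0$ is not strictly needed for this route. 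I would only remark that it gives $T(M)\perp T(M^\perp)$, as in Theorem \ref{4.1}.

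\textbf{Main obstacle.} The delicate point is justifying the use of Theorem \ref{3.6}. Its hypothesis concerns $\gamma(T_{/(M,N)})$, but $T_{/(M,N)}$ vanishes on $M^\perp$. So I must read "bounded below" as holding on $M$, which is how the proof of Theorem \ref{3.6} actually uses it.

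The term $\gamma(B)/2$ would become relevant only if one wants the bound from $M^\perp$ instead. For $y$ a unit vector in $M^\perp$, $T(0,y)=(Ay,(A+B)y)$. If $\|Ay\|\ge \gamma(B)/2$, then $\|T\|\ge\gamma(B)/2$. Otherwise $\|(A+B)y\|\ge \|By\|-\|Ay\|\ge \gamma(B)/2$ (using that $B$ is bounded below), and again $\|T\|\ge\gamma(B)/2$. I would include this as a secondary argument showing that either term of the minimum bounds $\|T\|$ from below.
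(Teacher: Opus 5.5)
Your proof is correct. It differs from the paper's mainly in the lower bound.

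\textbf{Upper bound.} Both arguments give the same Frobenius-type estimate by slightly different means. The paper factors $T=\begin{pmatrix}Y&Y\\Y&I\end{pmatrix}\begin{pmatrix}A+B&0\\0&A+B\end{pmatrix}$ with $A=Y(A+B)$ and $\|Y\|\le\lambda$, which uses the adjoint inclusion $A^*(\mathcal{B}_{\mathcal{H}_2})\subseteq\lambda(A+B)^*(\mathcal{B}_{\mathcal{H}_2})$. You bound the blocks of $T$ directly and use the other inclusion only to get $\|A\|\le\lambda\|A+B\|$. Your intermediate estimate $\|T\|^2\le 3\|A\|^2+\|A+B\|^2$ is, if anything, slightly sharper.

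\textbf{Lower bound.} The paper proves considerably more than the displayed inequality. It applies Theorem \ref{3.6} on $M$, giving the constant $\gamma(A:B)/(1+\lambda)$. It applies it again on $M^\perp$, after noting that $T$ is $(M^\perp,N^\perp)$-complementable with $\lambda=1$ and Schur complement $B$, giving $\gamma(B)/2$. It then uses $A^*(2A+B)=0$, i.e.\ $T(M)\perp T(M^\perp)$, and Pythagoras to get $\gamma\|z\|\le\|Tz\|$ for \emph{all} $z$. So $T$ itself is bounded below by the minimum, and $\gamma\le\|T\|$ is a trivial consequence.

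You instead notice that the displayed inequality only bounds $\|T\|$ below by a minimum, so a single unit vector in $M$ suffices. The orthogonality hypothesis and the hypothesis on $B$ are then superfluous. Your elementary $M^\perp$ argument likewise gives $\gamma(B)/2\le\|T\|$ without any Schur complement.

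Your route shows that the statement as written is much weaker than its hypotheses suggest. The paper's route is the one that actually uses those hypotheses, and it yields the stronger conclusion that $T$ is bounded below with constant $\min\{\gamma(A:B)/(1+\lambda),\gamma(B)/2\}$.

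Your caveat about reading ``bounded below'' in Theorem \ref{3.6} as ``bounded below on $M$'' is the right one, and the paper uses it the same way. It is harmless here, since $\mathcal{N}(T_{/(M,N)})=\mathcal{N}(A:B)\oplus\mathcal{H}_1$ gives $\gamma(T_{/(M,N)})=\gamma(A:B)$.
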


\begin{proof}
\textbf{1. Derivation of the Upper Bound:}
The block operator $T$ can be factorized as the product of a coefficient matrix and a diagonal block operator:
\[ T = \begin{pmatrix} A & A \\ A & A+B \end{pmatrix} = \begin{pmatrix} Y(A+B) & Y(A+B) \\ Y(A+B) & A+B \end{pmatrix} = \begin{pmatrix} Y & Y \\ Y & I \end{pmatrix} \begin{pmatrix} A+B & 0 \\ 0 & A+B \end{pmatrix}, \] where $Y$ is the reduced Douglas solution to $A=X(A+B).$\\
By the sub-multiplicative property of the operator norm, we have:
\[ \|T\| \leq \left\| \begin{pmatrix} Y & Y \\ Y & I \end{pmatrix} \right\| \cdot \left\| \begin{pmatrix} A+B & 0 \\ 0 & A+B \end{pmatrix} \right\| \]
The norm of the diagonal block matrix is $\|A+B\|$. For the first factor, we apply the standard block matrix norm inequality $\| \begin{pmatrix} X_{11} & X_{12} \\ X_{21} & X_{22} \end{pmatrix} \| \leq \sqrt{\|X_{11}\|^2 + \|X_{12}\|^2 + \|X_{21}\|^2 + \|X_{22}\|^2}$:
\[ \left\| \begin{pmatrix} Y & Y \\ Y & I \end{pmatrix} \right\| \leq \sqrt{\|Y\|^2 + \|Y\|^2 + \|Y\|^2 + \|I\|^2} = \sqrt{3\|Y\|^2 + 1} .\]
$Y$ is reduced Douglas solution to $A=X(A+B)$ and $T$ is $(M,N,\lambda)$-compleme-
ntable, we have $\|Y\| \leq \lambda.$
Substituting $\|Y\| \leq \lambda$ yields the upper bound \begin{equation}\label{eq4.1}
    \|T\| \leq \sqrt{3\lambda^2 + 1} \|A+B\|.
\end{equation}

\noindent \textbf{2. Derivation of the Lower Bound:}
From the Theorem \ref{3.6}, we have  \[ \frac{\gamma (T_{/(M,N)})}{1 + \lambda} \|x\| \leq \|Tx\|  \text{ for all } x \in M. \]

Hence \[ \frac{\gamma (A:B)}{1 + \lambda}  \leq \|T_{|M}\|. \]
With respect to the decomposition, $\mathcal{H}_1=M^{\perp} \oplus (M^{\perp})^{\perp}$ and $\mathcal{H}_2=N^{\perp}\oplus (N^{\perp})^{\perp},$ the operator $T$ has the decomposition $T=\begin{pmatrix}
    A+B & A\\
    A & A
\end{pmatrix}.$ Clearly $T$ is $(M^{\perp}, N^{\perp})$-complementable and $T_{/(M^{\perp},N^{\perp})}=\begin{pmatrix}
    B & 0\\
    0 & 0
\end{pmatrix}.$
Again by Theorem \ref{3.6}, we have  \[ \frac{\gamma (T_{/(M^{\perp},N^{\perp})})}{1 + 1} \|x\| \leq \|Tx\|  \text{ for all } x \in M^{\perp}. \]

\noindent Hence \[ \frac{\gamma (B)}{2}  \leq \|T_{|{M^{\perp}}}\|. \]

Let $\gamma= \min\Big\{\frac{\gamma (B)}{2}, \frac{\gamma (A:B)}{1 + \lambda}\Big\}.$ Clearly $\gamma>0,$\\
Now for any $z=x \oplus y \in \mathcal{H}_1\oplus \mathcal{H}_1,$ we have $Tz=Tx+Ty.$ Since $A^*(2A+B)=0,$ from the proof of Theorem \ref{4.1} we have noted that $T(M) \perp T(M^{\perp}).$ Therefore, $Tx\perp Ty.$ Hence, \begin{align*}
    \|Tz\|^2&=\|Tx\|^2+\|Ty\|^2\\
     & \geq \Big(\frac{\gamma (A:B)}{1 + \lambda}\Big)^2\|x\|^2+ \Big(\frac{\gamma (B)}{2}\Big)^2 \|y\|^2\\
     &\geq \gamma^2 (\|x\|^2+\|y\|^2)=\gamma ^2 \|z\|^2.
\end{align*}
This gives $$\gamma \|z\|\leq \|Tz\| \text{ for all } z\in \mathcal{H}_1 \oplus \mathcal{H}_2.$$

\noindent Thus $$\gamma = \min\Big\{\frac{\gamma (B)}{2}, \frac{\gamma (A:B)}{1 + \lambda}\Big\} \leq \|T\|.$$
Combining with equation \ref{eq4.1},
$$\gamma = \min\Big\{\frac{\gamma (B)}{2}, \frac{\gamma (A:B)}{1 + \lambda}\Big\} \leq \|T\|\leq \sqrt{3\lambda^2+1}\|A+B\|.$$
This completes the proof.
\end{proof}

In the next proposition, we explore how the structural condition $A^*(2A+B)=0$ impacts the relative magnitudes of the parallel sum $A:B$ and its constituent operator $A$. We establish that this condition prevents the parallel sum from acting as a strict contraction relative to $A$, enforcing a scaling factor of $\sqrt{2}$. Furthermore, we demonstrate that this geometric layout induces a rigid lower threshold for the complementability index $\lambda$.

\begin{prop}
Let $A, B \in \mathcal{B}(\mathcal{H}_1, \mathcal{H}_2)$ be operators such that $$T = \begin{pmatrix} A & A \\ A & A+B \end{pmatrix} \in \mathcal{B}(\mathcal{H}_1 \oplus \mathcal{H}_1, \mathcal{H}_2 \oplus \mathcal{H}_2)$$ is $(M,N)$-complementable. If  $A^*(2A+B)=0$ , then \begin{enumerate}
    \item $\|(A:B)x\|^2 \ge 2\|Ax\|^2$ for all $x \in M$
    \item $\|A:B\| \ge \sqrt{2}\|A\|.$
\end{enumerate} 
Moreover, if $A^*(2A+B)=0$ and $T$ is $(M,N,\lambda)$-complementable, then $$\lambda \geq \sqrt{2}-1.$$
\end{prop}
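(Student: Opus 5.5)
The plan rests on the orthogonality $T(M)\perp T(M^\perp)$, which, as shown in the proof of Theorem \ref{4.1} and the remark following it, is equivalent to $A^*(2A+B)=0$. For (1), I will refine the Pythagorean identity from the proof of Theorem \ref{4.1}. Fix $x=(x_1,0)\in M$. By Theorem \ref{eqgm} (or the factorization through a Douglas solution), $T_{/(M,N)}x = Tx+Ty$ for some $y\in M^\perp$. Orthogonality then gives
\[
\|(A:B)x\|^2=\|Tx\|^2+\|Ty\|^2\ \ge\ \|Tx\|^2 .
\]
The key observation is that $Tx=(Ax_1,Ax_1)$, so $\|Tx\|^2=\|Ax_1\|^2+\|Ax_1\|^2=2\|Ax_1\|^2$. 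Identifying $x$ with $x_1$, as the statement does, yields $\|(A:B)x\|^2\ge 2\|Ax\|^2$.

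For (2), I will take square roots in (1) and then take the supremum over the unit ball of $M\cong\mathcal{H}_1$. This gives $\|A:B\|\ge\sqrt2\,\|A\|$ immediately.

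For the bound on $\lambda$, I will produce an upper estimate of $\|(A:B)x\|$ in terms of $\|Ax\|$ and $\lambda$. I will use the left form of the Schur complement: $T_{/(M,N)}=\begin{pmatrix} A-YC & 0\\ 0&0\end{pmatrix}$ for the block data $B\mapsto A$, $C\mapsto A$, $D\mapsto A+B$. This gives $A:B=A-YA$, where $Y^*$ is the Douglas reduced solution of $\mathcal{R}(A^*)\subseteq\mathcal{R}((A+B)^*)$, that is, $A=Y(A+B)$. As in the proof of Theorem \ref{3.6} and the preceding proposition, $(M,N,\lambda)$-complementability gives $\|Y\|\le\lambda$. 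Hence
\[
\|(A:B)x\|\le \|Ax\|+\|Y\|\,\|Ax\|\le(1+\lambda)\|Ax\| .
\]
Combining this with (1) gives $\sqrt2\,\|Ax\|\le(1+\lambda)\|Ax\|$ for all $x$. Choosing any $x$ with $Ax\neq0$ and cancelling yields $\lambda\ge\sqrt2-1$.

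The main obstacle is this last step. The cancellation requires $A\neq0$; if $A=0$, then $A:B=0$ and no lower bound on $\lambda$ can follow. I would therefore state explicitly that the ``moreover'' part is meant for $A\neq0$, which is presumably implicit in the paper. A secondary check is that the bound $\|Y\|\le\lambda$ really applies to the \emph{left} Douglas solution, via $B^*(\mathcal{B}_N)\subseteq\lambda D^*(\mathcal{B}_{N^\perp})$ in Theorem \ref{char2}. This follows by the same argument as the construction of $X$ with $\|X\|\le\lambda$ given earlier, applied to adjoints.
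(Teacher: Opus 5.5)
Your proposal is correct and follows essentially the paper's route. For (1) and (2), both arguments use the orthogonality $T(M)\perp T(M^\perp)$, Pythagoras, and $\|Tx\|^2=2\|Ax_1\|^2$. For the bound on $\lambda$, both bound the parallel sum above by a factor $1+\lambda$ and compare with $\sqrt2$.

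The paper uses the right factorization $A:B=A(I-Z)$ with $A=(A+B)Z$ and compares operator norms. Your left factorization $A:B=(I-Y)A$ with $A=Y(A+B)$ gives a pointwise bound, which is an inessential variation.

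Your caveat that the $\lambda$-bound needs $A\neq0$ is well taken. The paper's proof silently cancels $\|A\|$, and for $A=0$ the operator $T$ is $(M,N,\lambda)$-complementable for every $\lambda>0$, so the stated conclusion fails.
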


\begin{proof}
Consider $x \in M$ represented as $(x_1, 0)^T$ with $x_1 \in \mathcal{H}_1$. The action of the block operator $T$ on $M$ is given by $ Tx = (Ax_1, Ax_1)^T$. The squared norm of this image is:
\begin{equation*}
\|Tx\|^2 = \|Ax_1\|^2 + \|Ax_1\|^2 = 2\|Ax_1\|^2 \text{.}
\end{equation*}
The condition $A^*(2A+B)=0$ implies that the inner product of the images $T(M)$ and $T(M^\perp)$ vanishes for all $x_1, y_2 \in \mathcal{H}$. Thus, $T(M) \perp T(M^\perp)$. 

By Theorem \ref{eqgm}, for complementable operators, the Schur complement (parallel sum) can be decomposed as $(A:B)x = Tx + Ty$ for some $y \in M^\perp$. Due to the established orthogonality, we apply the Pythagorean theorem:
\begin{equation*}
\|(A:B)x\|^2 = \|Tx\|^2 + \|Ty\|^2 \text{.}
\end{equation*}
Substituting the calculation for $\|Tx\|^2$ and noting $\|Ty\|^2 \ge 0$, we obtain $\|(A:B)x\|^2 \ge 2\|Ax\|^2$. Taking the supremum over all unit vectors $x \in \mathcal{H}$ on both sides yields $\|A:B\|^2 \ge 2\|A\|^2$.

Now suppose, $T$ is $(M,N,\lambda)$-complementable and $A^*(2A+B)=0.$ Using the definition of the parallel sum, we can write $\begin{pmatrix}
    A{:}B & 0\\
    0&0
\end{pmatrix}{=}\begin{pmatrix}
    A-AZ &0\\
    0 &0
\end{pmatrix}.$ Therefore, $$\|(A:B)\|=\|A(I_M-Z)\|\leq \|A\|\|I_M -Z\|=\|A\|(1+\lambda).$$  Already, we have established \begin{equation*}
    \|A:B\| \ge \sqrt{2}\|A\|
\end{equation*}
Combining with the previous inequality,  $$\sqrt{2}\|A\|\leq \|A\|(1+\lambda).$$
This gives $\lambda \ge \sqrt{2} - 1$.
\end{proof}

The following result provides explicit bounds for the range inclusion constant $\lambda$ under the established orthogonality condition.

We conclude this section, and the paper, by exploring the deep algebraic consequences of the condition $A^*(2A+B)=0$ on range inclusion solutions. We leverage Douglas' factorization theorem to provide explicit lower bounds for the norms of Douglas solutions to the equations $A:B=AX$ and $A=(A+B)X$. Crucially, when the underlying range spaces are closed, these bounds yield elegant, sharp lower estimates involving the Moore-Penrose inverse, illustrating how the geometric alignment of the component operators structurally restricts their relative solution factors.

\begin{prop}
Let $A, B \in \mathcal{B}(\mathcal{H}_1, \mathcal{H}_2)$ be operators such that $$T = \begin{pmatrix} A & A \\ A & A+B \end{pmatrix} \in \mathcal{B}(\mathcal{H}_1 \oplus \mathcal{H}_1, \mathcal{H}_2 \oplus \mathcal{H}_2)$$ is $(M,N)$-complementable and satisfies the condition $A^*(2A+B)=0$.

\begin{enumerate}
\item[(i)] If $X_1$ is the Douglas reduced solution to the equation $A:B = AX_1$, then:
    $
    \|X_1\| \ge \sqrt{2}.
    $\\
    In particular, if $\mathcal{R}(A)$ is closed, then $\|A^\dagger (A:B)\| \ge \sqrt{2}$.

    \item[(ii)] If $X_2$ is the Douglas reduced solution to the equation $A = (A+B)X_2$, then
    $
    \|X_2\| \ge \sqrt{2} - 1.
    $\\
    In particular, if $\mathcal{R}(A+B)$ is closed, then $\|(A+B)^\dagger A\| \ge \sqrt{2} - 1$.
\end{enumerate}
\end{prop}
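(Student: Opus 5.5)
Both parts will be read off the inequality of the previous proposition, $\|(A:B)x\|^2 \ge 2\|Ax\|^2$ for all $x$. That inequality came from the orthogonality $T(M)\perp T(M^\perp)$ forced by $A^*(2A+B)=0$, together with the Pythagorean decomposition in Theorem \ref{eqgm}. To it I add the submultiplicativity estimates coming from the factorizations in Douglas' theorem (Theorem \ref{dgls}).

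For (i), the first step is to check that the Douglas reduced solution $X_1$ exists. I will try to show $\mathcal{R}(A:B)\subseteq\mathcal{R}(A)$ using the representation $A:B = A - YA$ with $YA=A(I-Z)$, as in the proof of the previous proposition, where $A:B=A(I-Z)$. Since the statement presupposes $X_1$, I may simply take the factorization as given. Next I take $x$ with $\|x\|=1$. From $\|(A:B)x\|^2\ge 2\|Ax\|^2$ we get $\|A:B\|\ge\sqrt2\|A\|$. On the other hand, $\|A:B\| = \|AX_1\| \le \|A\|\,\|X_1\|$. If $A\neq 0$, combining the two gives $\|X_1\|\ge\sqrt2$.

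The case $A=0$ is degenerate, since then $A:B=0$ and $X_1=0$. I expect to have to exclude it tacitly, or treat it as trivial, and I will flag this. When $\mathcal{R}(A)$ is closed, the reduced solution is $X_1=A^\dagger(A:B)$. This holds because $AA^\dagger$ is the projection onto $\mathcal{R}(A)\supseteq \mathcal{R}(A:B)$, and $\mathcal{R}(A^\dagger)=\mathcal{N}(A)^\perp$, which is exactly condition (iii) of Theorem \ref{dgls}. The Moore--Penrose statement then follows from the bound on $\|X_1\|$.

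For (ii), I will use $A:B = A - A X_2$ when $X_2$ solves $A=(A+B)X_2$ on the appropriate side. Indeed, the Schur complement is $A - BZ$ with $Z$ the reduced solution of $C=DZ$, and here $C = A$, $D=A+B$, and the upper-right block $B$ of the shorted matrix is $A$. So $Z = X_2$ and $A:B = A(I-X_2)$. Then $\sqrt2\|A\|\le\|A:B\|\le \|A\|(1+\|X_2\|)$, which gives $\|X_2\|\ge\sqrt2-1$ for $A\ne0$. In the closed-range case, $X_2=(A+B)^\dagger A$ by the same projection argument as in (i).

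The main obstacle is bookkeeping. I must correctly identify that the Douglas solution in the Schur complement formula coincides with $X_2$, which requires its uniqueness via condition (iii), and I must handle the degenerate case $A=0$, where the bounds fail.
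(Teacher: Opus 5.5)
Your proposal is correct and follows the paper's proof. Both arguments combine $\|A:B\|\ge\sqrt2\,\|A\|$ from the preceding proposition with $A:B=AX_1$ and with $A:B=A(I-X_2)$. The only difference is in (ii): you bound $\|A(I-X_2)\|\le\|A\|(1+\|X_2\|)$ directly, whereas the paper goes through the minimality $\|I-X_2\|\ge\|X_1\|$ of the reduced solution. One small slip in your existence aside: $YA=AZ$, not $A(I-Z)$.

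Your restriction to $A\neq0$ is genuinely needed, and the paper misses it. For $A=0$, $B=I$ every hypothesis holds, yet $X_1=X_2=0$, so both bounds fail. The paper's ``comparing these yields'' step tacitly divides by $\|A\|$.
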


\begin{proof}
(i) Suppose $X_1$ is the reduced Douglas solution to the equation\\ $A:B=AX_1$, then we have $$\|A:B\| \leq \|A\|\|X_1\|.$$

 Given $A^*(2A+B)=0$, we established $\|A:B\| \ge \sqrt{2}\|A\|$. Comparing these yields $$\|X_1\| \ge \sqrt{2}.$$\\
 If $\mathcal{R}(A)$ is closed, the reduced solution is $X_1 = A^\dagger (A:B)$. Thus, 
$$\|A^\dagger (A:B)\| \ge \sqrt{2}.$$

(ii) Suppose $X_2$ is the Douglas reduced solution to the equation\\ $A = (A+B)X_2$, then
the parallel sum, $A:B = A-AX_2=A(I-X_2).$ From (i), we have $\|(I - X_2)\| \ge \|X_1\| \geq \sqrt{2}$. Applying the triangle inequality, $$\|I\| + \|X_2\| \ge \sqrt{2},$$ which simplifies to $$\|X_2\| \ge \sqrt{2} - 1.$$ If $\mathcal{R}(A+B)$ is closed, the reduced solution is uniquely $X_2 = (A+B)^\dagger A$. Thus, $$\|(A+B)^\dagger A\| \ge \sqrt{2} - 1.$$
\end{proof}

\section*{Acknowledgements}
N/A
\section*{Funding information}
The present work of the second author was partially supported by Anusandhan National Research Foundation (ANRF), Department of Science and Technology, Government of India (Reference Number: MTR/2023/000471) under the scheme “Mathematical Research Impact Centric Support (MATRICS)”.

\section*{Data availability}
Not applicable. No data was used or generated for this study.
\section*{Declaration}
The authors declare that there are no conflicts of interest regarding the publication of this article.

\bibliographystyle{plain}

\end{document}